\documentclass[a4paper,preprint,11pt]{elsarticle}

\usepackage{amssymb}
\usepackage{algorithm}
\usepackage{algpseudocode}
\usepackage{color}
\usepackage[colorlinks=true, linkcolor=blue, citecolor=blue, urlcolor=blue]{hyperref}
\usepackage[normalem]{ulem}
\usepackage{bm}
\usepackage{tikz}
\usetikzlibrary{calc}
\usetikzlibrary{positioning}

\newcommand{\nlabel}[3]{%
  \node[#3=2pt] at (#1) {\small $#2$};%
}

\usepackage{graphicx}
\usepackage{array}
\usepackage{caption}
\usepackage{amsmath}

\journal{Information Processing Letter}

\newtheorem{theorem}{Theorem}

\newtheorem{lem}[theorem]{Lemma}
\newtheorem{remark}[theorem]{Remark}

\newtheorem{definition}[theorem]{Definition}

\newproof{proof}{Proof}
\algrenewcommand\algorithmicrequire{\textbf{Input:}}
\algrenewcommand\algorithmicensure{\textbf{Output:}}

\begin{document}
	
	\begin{frontmatter}
		
		
		
		\title{\textcolor{black}{Structural characterization and efficient recognition of probe diamond-free graphs}\tnoteref{t1}}
		
		\tnotetext[t1]{L.N.~Grippo, and M.C.~Lin}
		
		
		
		\author[UNGS,CONICET]{Luciano N. Grippo}
		\ead{lgrippo@campus.ungs.edu.ar}
		\author[IC,DC,CONICET]{Min Chih Lin}
		\ead{ oscarlin@dc.uba.ar }

		\address[UNGS]{Universidad Nacional de General Sarmiento, Instituto de Ciencias, Argentina}
		
		\address[IC]{Instituto de C\'alculo, Universidad Nacional de Buenos Aires, Argentina}
		
		\address[DC]{Departamento de Computaci\'on, Universidad de Buenos Aires, Argentina}
		
		\address[CONICET]{Consejo Nacional de Investigaciones Cient\'ificas y T\'ecnicas, Argentina}

		\begin{abstract}

A graph is probe diamond-free if its vertex set admits a partition into probes and nonprobes, where the set of nonprobes is independent, such that adding edges only between pairs of nonprobes yields a diamond-free graph.
Although this class admits a characterization by forbidden induced subgraphs,
such a characterization does not directly lead to an efficient recognition
algorithm. In this work we introduce a new structural characterization of probe
diamond-free graphs based on a local condition, called the
\emph{locally union of complete split} property, together with an auxiliary
bipartite graph.
Using this framework, we obtain an \(O(nm)\)-time recognition algorithm for
(nonpartitioned) probe diamond-free graphs.

A distinctive feature of our algorithm is that it is certificate-producing.
When the input graph does not belong to the class, the algorithm outputs a
negative certificate in the form of a sequence of vertices inducing a minimal
forbidden subgraph, ordered according to a fixed degree--lexicographic rule.
This ordered representation enables particularly simple and efficient
certificate verification.
When the input graph is probe diamond-free, the algorithm outputs a positive
certificate consisting of a probe partition and a completion set.

To the best of our knowledge, this is the first $O(nm)$-time recognition algorithm for probe diamond-free graphs that produces explicit certificates, providing an alternative to both sandwich-based approaches and exhaustive forbidden subgraph testing.
\end{abstract}

\begin{keyword}
graph recognition \sep certificate-producing algorithms \sep probe diamond-free graphs \sep forbidden induced subgraphs \sep structural characterization
\end{keyword}
		
	\end{frontmatter}
	
	
	
\section{Introduction}\label{sec: intro}

Let $\mathcal G$ be a graph class. A graph $G=(V,E)$ is \emph{probe-$\mathcal G$} if its vertex set admits a partition into a set of probe vertices $P$ and an independent set of nonprobe vertices $N$ such that there exists a set $F$ of non-edges of $G$, each with both endpoints in $N$, for which the supergraph $G^*=(V,E\cup F)$ belongs to $\mathcal G$.

A graph $G=(P\cup N,E)$ is said to be \emph{partitioned} if its vertex set is given together with a partition into a set $P$ of probe vertices and an independent set $N$ of nonprobe vertices. A partitioned graph $G=(P\cup N,E)$ is a \emph{partitioned probe-$\mathcal G$ graph} if there exists a completion $G^*=(P\cup N,E\cup F)\in\mathcal G$ where every edge of $F$ has both endpoints in $N$. A probe-$\mathcal G$ graph with no prescribed probe partition is called a \emph{nonpartitioned} probe-$\mathcal G$ graph, \textcolor{black}{or simply a probe-$\mathcal G$ graph when the context is clear}.

We call $G^*$ a \emph{probe-$\mathcal G$ completion} of $G$; when no confusion may arise, we simply call it a \emph{completion}. We denote a probe partition of $G$ by \textcolor{black}{$(P,N)$}.

Given a family of graphs $\mathcal H$, we say that a graph $G$ is \emph{$\mathcal H$-free} if $G$ contains no member of $\mathcal H$ as an induced subgraph. When $\mathcal H=\{H\}$, we simply say that $G$ is \emph{$H$-free}.

Probe classes have been studied extensively in the literature. Probe interval graphs, in particular, have received considerable attention due to applications in biology~\cite{ZhangSchonFischerEtAl1994}. From an algorithmic standpoint, the fastest recognition algorithm for probe interval graphs with a given probe partition is due to McConnell and Nussbaum, who obtained a linear-time algorithm~\cite{McConnellN2009}. \textcolor{black}{In} the nonpartitioned setting, Chang et al. gave a polynomial-time recognition algorithm~\cite{ChangKloksLiuPeng2005}. Nussbaum later obtained a \textcolor{black}{linear-time algorithm for proper interval graphs} when the probe partition is part of the input~\cite{Nussbaum2014}.

Beyond probe interval graphs—the class that originally motivated the systematic study of probe-$\mathcal G$ classes—a body of work \textcolor{black}{has addressed} other probe classes from both algorithmic and structural perspectives. In 2007, Berry et al. presented an $O(m^2)$-time recognition algorithm for probe chordal graphs~\cite{BerryGL2007}. Le and Ridder obtained a linear-time recognition \textcolor{black}{algorithm} for probe cographs~\cite{LeR2007bis}, \textcolor{black}{as well as} an $O(n^2+nm)$-time recognition algorithm for probe split graphs~\cite{LeR2007}. Two years later, Bayer et al. gave \textcolor{black}{linear-time recognition algorithms} for both probe threshold and probe trivially perfect graphs~\cite{BAyerLR2009}. In 2013, Chang et al. proposed an $O(nm)$-time recognition algorithm for probe distance-hereditary graphs~\cite{Chang2013}. Le and Peng subsequently obtained a linear-time recognition algorithm for probe block graphs~\cite{LeP2015}. Probe permutation graphs~\cite{ChandlerCMK2009} and probe comparability graphs~\cite{ChandlerCKLP2008} have also been studied in the partitioned setting, where polynomial-time recognition algorithms are provided in both cases. Recently, Dabrowski et al. completely \textcolor{black}{determined} the complexity of $d$-\textsc{Cut} for every $d\ge 1$ on partitioned probe $H$-free graphs~\cite{DabrowskiEtAl2026}.

Deciding whether a given partitioned graph is probe-$\mathcal G$ can be viewed as a special case of the \emph{Graph Sandwich Problem}. In this problem, one is given a vertex set $V$ and two edge sets $E_1\subseteq E_2$, and asked whether there exists a graph \textcolor{black}{$H=(V,E_H)$} in $\mathcal G$ such that \textcolor{black}{$E_1\subseteq E_H\subseteq E_2$}. For $\mathcal G$ equal to the class of diamond-free graphs, Dantas et al.\textcolor{black}{~\cite{DantasFdaST2011}} showed that the corresponding sandwich problem can be solved in $O(n^4m)$ \textcolor{black}{time, where $n=|V|$ and $m=|E_2|$.
In our setting, $E_1=E$ and
\[
E_2 = E \cup \{xy \mid x,y\in N,\ x\neq y\},
\]
and hence $m=O(n^2)$, which yields an $O(n^6)$-time recognition algorithm for
partitioned probe diamond-free graphs.
}

The probe class obtained by taking $\mathcal G$ to be the class of diamond-free
graphs was characterized by a finite list of minimal forbidden induced subgraphs
in~\cite{BonomoFDDGSS}.
This yields a polynomial-time recognition algorithm for probe diamond-free
graphs; however, a direct implementation based on exhaustive forbidden-subgraph
testing runs in $O(n^9)$ time.

In this article, we present a new structural characterization of probe
diamond-free graphs based on a local condition—namely the
\emph{locally union of complete split} property—and an associated auxiliary
bipartite graph.
This characterization leads to an $O(nm)$-time recognition algorithm for
(nonpartitioned) probe diamond-free graphs.

A distinctive feature of our approach is that the algorithm is
\emph{certificate-producing}.
When the input graph does not belong to the class, the algorithm outputs a
negative certificate in the form of a \emph{sequence} of vertices inducing a
minimal forbidden subgraph, ordered according to a fixed
degree--lexicographic convention.
This contrasts with traditional forbidden-subgraph-based recognizers, which
typically output an unordered vertex set and may require exponential-time
verification over all permutations.

When the input graph is probe diamond-free, the algorithm outputs a positive
certificate consisting of a partition $(P,N)$ and a completion set $F$.
Although verifying such a positive certificate can be done by recognizing
diamond-freeness in the completed graph, the best known algorithms for this task
(e.g.~\cite{LinSS2012}) may lead to higher worst-case complexity due to the size
of $F$.
In this sense, our algorithm highlights a setting where producing certificates
is asymptotically easier than verifying them, which is uncommon in graph
recognition problems.

The paper is organized as follows.
In Section~\ref{sec: preliminaries} we introduce notation and recall basic facts
on probe classes and diamond-free graphs.
In Section~\ref{sec: characterization} we develop the structural framework
underlying our approach, including the locally union of complete split property
and the auxiliary bipartite graph $G_B$, and prove our new characterization of
probe diamond-free graphs.
Section~\ref{sec: algorithm} presents the recognition algorithm, establishes its
$O(nm)$ running time, and shows how negative certificates are produced.
We conclude in Section~\ref{sec: conclusion} with final remarks.

%

\section{Definitions and preliminary results}\label{sec: preliminaries}

Given a graph $G=(V,E)$ and a set $A\subseteq V$, we denote by $G[A]$ the subgraph of $G$ induced by $A$. By $C_n$ we denote a chordless cycle on $n$ vertices. The \emph{complement} of $G$, denoted by $\overline{G}$, is the graph with vertex set $V$ in which two distinct vertices $u,v\in V$ are adjacent if and only if $uv\notin E$. By $d_G(v)$ we denote the degree of a vertex $v$ in $G$. A \emph{clique} is a set of pairwise adjacent vertices\textcolor{black}{, and} the complete graph on $n$ vertices is denoted by $K_n$. By $N_G(v)$ we denote the neighborhood of $v$ in $G$ (when \textcolor{black}{the graph is clear from the context, we write} $N(v)$), and by $N_G[v]=N_G(v)\cup\{v\}$ the closed neighborhood of $v$. A \emph{diamond} is the graph $K_4-e$, \textcolor{black}{that is}, the complete graph on four vertices with one edge removed. The \emph{tips} of a diamond are the two vertices of degree two in the diamond, \textcolor{black}{equivalently, the endpoints of the missing edge}. Let $R$ and $S$ be two disjoint subsets of $V(G)$. We say that $R$ is \emph{complete to} $S$ if every vertex of $R$ is adjacent to every vertex of
$S$.

Let $G$ and $H$ be two graphs. The \emph{disjoint union} of $G$ and $H$, denoted by $G+H$, is the graph having $G$ and $H$ as its connected components.

Let $G=(P\cup N,E)$ and $H=(P'\cup N',E')$ be partitioned graphs with independent sets $N$ and $N'$, respectively. We say that $H$ is a \emph{partitioned subgraph} (respectively, a \emph{partitioned induced subgraph}) of $G$ if $H$ is a subgraph (respectively, an induced subgraph) of $G$ and $N'\subseteq N$ and $P'\subseteq P$. We say that $G$ is \emph{isomorphic} to $H$ if there exists a \textcolor{black}{bijective} function $f:P\cup N\to P'\cup N'$ preserving adjacency and \textcolor{black}{satisfying $f(N)=N'$ and $f(P)=P'$}. We say that $G$ \emph{does not contain} $H$ as \textcolor{black}{an} induced subgraph if no partitioned induced subgraph of $G$ is isomorphic to $H$.

A graph $G=(V,E)$ is a \emph{split graph} if its vertex set can be partitioned as \textcolor{black}{$V=K\cup S$}, where \textcolor{black}{$K$} is a clique and \textcolor{black}{$S$} is an independent set. Such a partition \textcolor{black}{$(K,S)$} is called a \emph{split partition} of $G$.

\textcolor{black}{Given a} split graph $G$ with a split partition \textcolor{black}{$V=K\cup S$, where $S$ is a maximum independent set, we say that $G$} is a \emph{complete split graph}\textcolor{black}{, and that $(K,S)$ is a \emph{complete split partition},}
if every vertex of \textcolor{black}{$K$} is adjacent to every vertex of \textcolor{black}{$S$}. 

It is well known that complete split graphs can be characterized by forbidden induced subgraphs as follows.

%

\begin{lem}\label{lem: complete_split_characterization}
	A graph $G$ is complete split if and only if $G$ is
	$\{\overline{P_3},\overline{2K_2}\}$-free.
\end{lem}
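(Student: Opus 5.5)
The plan is to pass to complements. A complete split graph with partition $(K,S)$ has $\overline{G}$ equal to the disjoint union of a complete graph on $S$ and isolated vertices on $K$. Indeed, $K$ is a clique that is complete to $S$, so in $\overline{G}$ every vertex of $K$ is isolated. Also $S$ is independent in $G$, so it is a clique in $\overline{G}$. Under complementation, $\overline{P_3}$ becomes $P_3$ and $\overline{2K_2}$ becomes $2K_2$. So it suffices to prove the following: $\overline{G}$ is a single clique plus isolated vertices if and only if $\overline{G}$ is $\{P_3,2K_2\}$-free.

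For necessity, let $H=\overline{G}$ be a clique $S$ together with isolated vertices. Every induced subgraph of $H$ is again a clique plus isolated vertices. Neither $P_3$ (which is connected but not complete) nor $2K_2$ (which has two nontrivial components) has this form, so $H$ contains neither. Equivalently, $G$ is $\{\overline{P_3},\overline{2K_2}\}$-free.

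For sufficiency, assume $H=\overline{G}$ is $\{P_3,2K_2\}$-free.
\begin{itemize}
\item Each connected component of a $P_3$-free graph is a clique: otherwise some component contains two vertices at distance two, and a shortest path between them induces $P_3$.
\item If two components had at least two vertices each, an edge from each would induce $2K_2$. Hence at most one component is nontrivial.
\end{itemize}
Let $S$ be the vertex set of that component, or $S=\emptyset$ if every component is trivial, and let $K=V\setminus S$. Then in $G$, $K$ is a clique, $S$ is independent, and $K$ is complete to $S$.

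The only step needing care is the paper's requirement that $S$ be a \emph{maximum} independent set of $G$. An independent set of $G$ is a clique of $H$. Every clique of $H$ lies inside one component, so it is either a subset of $S$ or a single isolated vertex. Therefore $S$ is maximum whenever $|S|\ge 1$.

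The degenerate case is $H$ edgeless, i.e., $G$ complete. Then we take $S$ to be any single vertex and $K$ the rest. This is a split partition with $K$ complete to $S$, and $|S|=1$ equals the independence number of $G$. (If $G$ is the empty graph the statement holds trivially.) This boundary case is the only place where the argument needs explicit attention.
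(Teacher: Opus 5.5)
Your proof is correct and complete. Complementing turns a complete split partition $(K,S)$ into a clique on $S$ plus isolated vertices on $K$. This reduces the lemma to the fact that a $\{P_3,2K_2\}$-free graph has at most one nontrivial component, and that component is a clique. You also correctly isolate the one point where the paper's convention that $S$ be a \emph{maximum} independent set matters: when $G$ is complete, $S$ must be a single vertex rather than $\emptyset$.

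The paper gives no proof of its own here (it quotes the lemma as well known), so there is no alternative route to compare. Your complement picture also yields two facts the paper states next without proof:
\begin{itemize}
\item if $G$ is not complete, every complete split partition must have $S$ equal to the unique nontrivial component of $\overline{G}$, which gives uniqueness;
\item reading off degrees ($n-1$ on $K$, $|K|=n-|S|$ on $S$) gives the forward direction of Lemma~\ref{lem:complete_split_degree}.
\end{itemize}
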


Equivalently, the two forbidden induced subgraphs are
$K_2+K_1=\overline{P_3}$ and $C_4=\overline{2K_2}$.
Moreover, every complete split graph $G$ that is not complete has a
unique complete split partition $(K,S)$.

The \emph{degree set} of a graph $G$ is the set of distinct degrees of its
vertices.

\textcolor{black}{
\begin{lem}\label{lem:complete_split_degree}
Let $G=(V,E)$ be a graph and let $n=|V|$.
Then $G$ is a complete split graph if and only if one of the following holds:
\begin{enumerate}
	\item $d(v)=n-1$ for every $v\in V$ (i.e., $G$ is complete);
	\item $d(v)=0$ for every $v\in V$ (i.e., $G$ has no edges);
	\item there exists an integer $k$ with $1\le k<n-1$ such that the degree set
	of $G$ is exactly $\{n-1,k\}$, and $k$ equals the number of vertices of degree
	$n-1$.
\end{enumerate}
Moreover, in Case~(2) the unique complete split partition is $(\emptyset,V)$,
and in Case~(3) the set
$K=\{v\in V : d(v)=n-1\}$ and $S=V\setminus K$
form the unique complete split partition of $G$.
\end{lem}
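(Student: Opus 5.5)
I would prove both directions directly from the definition of a complete split graph: a partition $(K,S)$ with $K$ a clique, $S$ a maximum independent set, and $K$ complete to $S$. Lemma~\ref{lem: complete_split_characterization} is not needed, since a degree count suffices.

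\emph{Necessity.} Let $(K,S)$ be a complete split partition, with $|K|=k$ and $|S|=n-k$. Every vertex of $K$ is adjacent to all other vertices of $K$ and to all of $S$, so it has degree $n-1$. Every vertex of $S$ has no neighbours in $S$ and is adjacent to all of $K$, so it has degree exactly $k$. I then split on $k$ and $n-k$:
\begin{itemize}
\item If $S=\emptyset$, or $|S|=1$ (so $k=n-1$), then every vertex has degree $n-1$, which is Case~(1).
\item If $k=0$, the graph is edgeless, which is Case~(2).
\item Otherwise $1\le k<n-1$. Then $k\neq n-1$, so the degree set is exactly $\{n-1,k\}$, and $k$ equals the number of vertices of degree $n-1$, namely $|K|$. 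This is Case~(3).
\end{itemize}
In Case~(3) no vertex of $S$ has degree $n-1$, so $K=\{v: d(v)=n-1\}$.

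\emph{Sufficiency.}
\begin{itemize}
\item In Case~(1), take $K=V\setminus\{v\}$ and $S=\{v\}$ for any vertex $v$. Then $S$ is a maximum independent set and $K$ is complete to $S$. The degenerate case $n=0$ can take $K=S=\emptyset$.
\item In Case~(2), take $(\emptyset,V)$.
\item In Case~(3), let $K=\{v:d(v)=n-1\}$, so $|K|=k$, and let $S=V\setminus K$. Vertices of degree $n-1$ are adjacent to everything, so $K$ is a clique complete to $S$. Each $s\in S$ therefore already has $k$ neighbours in $K$. Since $d(s)=k$, it has no neighbour in $S$, so $S$ is independent.
\end{itemize}
For maximality in Case~(3): an independent set contains at most one vertex of the clique $K$. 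If it contains a vertex of $K$, that vertex is adjacent to everything else, so the set is a singleton. Since $|S|=n-k\ge 2$, $S$ is a maximum independent set.

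\emph{Uniqueness.} In Case~(2), an independent set of maximum size is $V$ itself, forcing $(\emptyset,V)$. In Case~(3), take any complete split partition $(K',S')$. By the necessity computation, vertices of $K'$ have degree $n-1$, and vertices of $S'$ have degree $|K'|$. If $|K'|\ne n-1$, this forces $K'=\{v: d(v)=n-1\}=K$. If $|K'|=n-1$, every vertex would have degree $n-1$, contradicting $k<n-1$.

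The main obstacle is the bookkeeping around the boundary cases. The requirement that $S$ be maximum is what makes $|S|=1$ collapse into Case~(1), and it also shows why the degree set needs $k<n-1$ to separate $K$ from $S$. I would handle these cases explicitly as above.
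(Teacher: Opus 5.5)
Your proof is correct and complete. The two degree facts (vertices of $K$ have degree $n-1$, vertices of $S$ have degree $|K|$), your explicit handling of the boundary cases $|S|\le 1$ and $K=\emptyset$, and the observation that an independent set meeting the clique $K$ is a singleton together settle both directions, the maximality of $S$, and the uniqueness claims. The paper states this lemma without proof and treats it as a routine check from the definition; your direct degree count is the verification it leaves implicit.
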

}

%
%

\begin{remark}\label{rem:complete_split_algoritm}
\textcolor{black}{
Lemma~\ref{lem:complete_split_degree} yields a very simple linear-time
recognition algorithm for complete split graphs, which we call the
\emph{Complete Split Degree Algorithm (CSDA)}.
Indeed, by computing the degree of every vertex and inspecting the degree set,
we can check whether one of the three cases of the lemma holds.
This requires $O(n+m)$ time, where $n=|V|$ and $m=|E|$, when the input is given
as the vertex and edge sets of the graph.
If the input consists only of the degree sequence, the running time can be
reduced to $O(n)$.
}

\textcolor{black}{
Moreover, whenever the recognition is positive, CSDA can also construct a
complete split partition $(K,S)$ within the same time bound:
in Case~(1) it outputs $(V\setminus\{v\},\{v\})$ for any vertex $v\in V$,
in Case~(2) it outputs $(\emptyset,V)$,
and in Case~(3) it sets
$K=\{v\in V : d(v)=n-1\}$ and $S=V\setminus K$.
}
\end{remark}

The following two results provide characterizations, in terms of minimal (partitioned) forbidden induced subgraphs, for a graph to be probe diamond-free.

\begin{theorem}~\cite{BonomoFDDGSS}\label{thm: charaterization partitioned}
Let $G=(P\cup N,E)$ be a partitioned graph. Then $G$ is a partitioned probe diamond-free graph if and only if $G$ does not contain any partitioned graph depicted in Figure~\ref{fig: forbidden partitioned subgraphs} as \textcolor{black}{an} induced subgraph. Moreover, a probe completion is \textcolor{black}{obtained} by adding \textcolor{black}{the set $F$ of non-edges} of $G$ whose endpoints belong to $N$ and \textcolor{black}{are tips} of the same induced diamond of $G$.
\end{theorem}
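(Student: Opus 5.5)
The plan is to prove the two directions separately and to obtain the ``moreover'' part as a by-product of sufficiency. Call a non-edge $xy$ with $x,y\in N$ \emph{forced} if $x$ and $y$ are the tips of some induced diamond of $G$, and let $F$ be the set of forced non-edges.

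The first observation is that \emph{every} completion $G^*$ must contain $F$. Indeed, the four vertices of an induced diamond of $G$ either remain a diamond in $G^*$, or some added edge lies among them. Only the tip pair is a non-edge, so that added edge can only be the tip pair. In particular, if some induced diamond of $G$ has a tip in $P$, then $G$ is not partitioned probe diamond-free; this is the smallest obstruction, and I expect it to appear in Figure~\ref{fig: forbidden partitioned subgraphs}.

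For necessity, I would go through the partitioned graphs of Figure~\ref{fig: forbidden partitioned subgraphs} one by one. For each $H$, I would compute its forced set $F_H$ (iterating if needed) and exhibit a diamond in $H+F_H$ that cannot be destroyed. A diamond is destroyed only by adding its tip pair, so it suffices to find a diamond whose tips are not both in $N$, or whose tips are tips of no diamond. Since every completion contains $F_H$, this shows $H$ has no completion. Being partitioned probe diamond-free is hereditary for partitioned induced subgraphs, since we may restrict $F$. Hence $G$ cannot contain any $H$.

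For sufficiency together with the ``moreover'' statement, assume $G$ contains none of the graphs of the figure, and set $G^*=G+F$. By the first observation, all pairs in $F$ lie in $N$, so $G^*$ is an admissible completion; it remains to show that $G^*$ is diamond-free. Suppose $D=\{a,b,c,d\}$ induces a diamond in $G^*$, and split into cases by the set $F\cap E(G^*[D])$ of added edges it uses, which has at most $5$ elements.
\begin{itemize}
\item If it uses no added edge, then $D$ is a diamond of $G$. Its tip pair is then a non-edge between two vertices of $N$ that are tips of a diamond of $G$, so it lies in $F$, contradicting that the tips are non-adjacent in $G^*$.
\item Otherwise, for each added edge $xy$ used by $D$, choose witnesses $u_{xy},v_{xy}$ such that $\{x,y,u_{xy},v_{xy}\}$ is a diamond of $G$ with tips $x,y$. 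Since $N$ is independent, the endpoints of distinct added edges interact only through the witnesses, and at most two disjoint $N$-pairs fit in four vertices. So only a bounded number of configurations arise, each on at most about $4+2\cdot 2$ vertices after merging.
\end{itemize}
In each configuration, the union of $D$ and the witnesses induces in $G$ a small partitioned graph. I would show that it contains one of the forbidden graphs of the figure, choosing witnesses to minimize overlap, and examining the adjacencies between witnesses and the non-tip vertices of $D$. For instance, if the tips of $D$ in $G^*$ are not both in $N$, one can often build a diamond of $G$ with a probe tip. Otherwise, the non-edge between the tips together with the witness diamond gives one of the larger forbidden configurations.

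The main obstacle is this last case analysis. One must check that every way a new diamond can be created by $F$ collapses, after identifying vertices, to a graph already in the figure, and that no adjacency pattern between witnesses and $D$ escapes the list. I would organize it by the position of the added edges in $D$: a spine edge, a side edge, or both. I would also use the independence of $N$ aggressively to prune adjacencies before matching against the figure.
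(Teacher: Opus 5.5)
The paper does not prove this statement; it imports it from \cite{BonomoFDDGSS}, so your proposal has to stand on its own. Your framework is sound: every completion contains the forced set $F$, the property is hereditary for partitioned induced subgraphs, and $G+F$ is the right candidate. But the sufficiency direction, which is the entire content of the theorem, is only announced, and the plan you sketch for it would not work as stated. The claim that only boundedly many configurations on ``about $4+2\cdot 2$'' vertices arise is false. A diamond $D$ of $G+F$ may contain three nonprobes pairwise joined by added edges: that needs three witness pairs and nine vertices, and it is exactly where $H_5$ comes from. It may even contain four nonprobes using five added edges. Organizing by ``spine edge, side edge, or both'' misses these cases. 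Moreover, taking $D$ together with arbitrarily chosen witnesses does not by itself produce a graph of the figure, because witnesses can have arbitrary extra adjacencies, and ``choosing witnesses to minimize overlap'' gives no control over them. There is also a smaller slip in necessity. A diamond of $H+F_H$ whose tips both lie in $N$ can always be destroyed by adding its tip pair, forced or not. The correct criterion is that, after closing $F_H$ under forcing, a diamond with a probe tip survives. Examples are the diamond with tips $u,r$ in $H_4$, and the one with tips $u_{1,2},v_3$ in $H_5$.

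The missing idea is to extract structure from the hypotheses before any case analysis. (i) If no diamond has a probe tip, then $G[N(x)]$ is $P_3$-free for every $x\in N$: a $P_3$ $a\,b\,c$ in $N(x)$ makes $\{x,a,b,c\}$ a diamond whose tip $a$, being adjacent to $x$, is a probe. (ii) Add $H_4$-freeness, and let $xy\in F$ have centres $u,z$. By (i), every common neighbour of $x$ and $y$ sees both or neither of $u,z$, and ``neither'' gives an $H_4$. Hence $W_{xy}=N(x)\cap N(y)$ is a clique of probes with at least two vertices. Now classify a diamond $D$ of $G+F$ by $k=|V(D)\cap N|$.
\begin{itemize}
\item $k\le 1$: $D$ is a diamond of $G$ with a probe tip.
\item $k=2$: necessarily $xy\in F$. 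The missing pair is either inside the clique $W_{xy}$, which is impossible, or of the form $xa$. In the latter case $a$ lies in the component of $G[N(y)]$ containing $W_{xy}$, so $\{x,a,u,z\}$ is a diamond of $G$ with probe tip $a$.
\item $k=3$ with a missing pair $xw$ of nonprobes: the vertex $a$ lies in $W_{xy}\cap W_{yw}$, so $W_{xy}\cup W_{yw}$ is a clique. Then $x$ is adjacent to some $d\in W_{yw}\setminus\{a\}$; otherwise $\{x,a,c,d\}$ with $c\in W_{xy}$ is a diamond with probe tip $d$. So $\{x,w,a,d\}$ is a diamond of $G$ with tips $x,w$, giving $xw\in F$, a contradiction.
\item $k=3$ with $x,y,w$ pairwise joined by $F$: either $W_{xy}=W_{yw}=W_{xw}$ is complete to $\{x,y,w\}$, and no diamond arises, or the three cliques are pairwise disjoint and anticomplete, each anticomplete to the third nonprobe. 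In the second case two vertices from each give an induced $H_5$.
\item $k=4$: $D$ contains two such triangles sharing an edge, so a common clique sees both tips and the tip pair lies in $F$.
\end{itemize}
These are the same arguments the paper uses later, in Lemma~\ref{lem: neighborhood of vertices in N_G} and in Steps~4--5 of the correctness proof of Algorithm~\ref{alg: recognition algo}. Without them, your case analysis has no mechanism for controlling witness adjacencies.
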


\begin{theorem}~\cite{BonomoFDDGSS}\label{thm: charaterization non-partitioned}
Let $G$ be a graph. Then $G$ is probe diamond-free if and only if $G$ is
$\{\overline{P_3 + 2K_1},\, \overline{P_4 + K_1},\, \overline{P_3 + K_2},\, \overline{2K_2 + K_1}\}$-free and does not contain any graph depicted in Figures~\ref{fig: forbidden subgraphs} and~\ref{fig: forbidden  subgraphs 2} as \textcolor{black}{an} induced subgraph. Moreover, a suitable probe partition is obtained by defining $N$ as the set of vertices of $G$ that are tips of an induced diamond, and $P=V\setminus N$. \textcolor{black}{A probe completion is then obtained} by adding the set $F$ of non-edges of $G$ whose endpoints are tips of the same induced diamond.
\end{theorem}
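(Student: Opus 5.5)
The plan is to derive this statement from Theorem~\ref{thm: charaterization partitioned} by showing that the nonprobe set can be forced. Let $T$ be the set of vertices of $G$ that are tips of some induced diamond of $G$.

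\textbf{Forced tips.} Let $(P,N)$ be any probe partition with completion $G^*=(V,E\cup F)$, and let $D$ be an induced diamond of $G$ with tips $a,b$. Since $F$ only adds edges, $D$ survives in $G^*$ unless the single missing pair $ab$ is added. Hence $ab\in F$, so $a,b\in N$. Therefore $T\subseteq N$ for every probe partition, and every completion contains all tip pairs.

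\textbf{Reduction to the partition $(V\setminus T,T)$.} I would prove that $G$ is probe diamond-free if and only if two conditions hold: (i) $T$ is independent in $G$; (ii) the partitioned graph $(V\setminus T,T)$ is partitioned probe diamond-free.

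For necessity, (i) holds because $T\subseteq N$ and $N$ is independent. For (ii), take a valid partition $(P,N)$. By the ``moreover'' part of Theorem~\ref{thm: charaterization partitioned}, the completion $F_0$ consisting of pairs of tips of a common induced diamond already works for $(P,N)$. Every pair in $F_0$ lies inside $T$, so $F_0$ is also a legal completion for the partition $(V\setminus T,T)$. This yields (ii), and it also gives the stated choice of $N$ and $F$.

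Sufficiency is immediate: $(V\setminus T,T)$ is then itself a probe partition.

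\textbf{From conditions to forbidden subgraphs.} Next I would translate (i) and (ii) into unpartitioned forbidden subgraphs.

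A violation of (i) means two adjacent vertices $a,b$, each a tip of some induced diamond. Taking the union of the two witnessing diamonds gives at most $8$ vertices, and possibly fewer when they overlap. Enumerating the overlap patterns and minimizing should produce the small graphs of Figure~\ref{fig: forbidden subgraphs} and the four $5$-vertex complements $\overline{P_3+2K_1}$, $\overline{P_4+K_1}$, $\overline{P_3+K_2}$, $\overline{2K_2+K_1}$. For example, a $5$-vertex graph in which two adjacent vertices are tips of overlapping diamonds lies in this family.

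A violation of (ii) means some partitioned forbidden graph $H$ of Figure~\ref{fig: forbidden partitioned subgraphs} occurs, with its nonprobe vertices in $T$ and its probe vertices outside $T$. I would replace each nonprobe vertex of $H$ by its witnessing diamond, forget the partition, and then pass to a minimal induced subgraph. This minimal subgraph is still not probe diamond-free, by the reduction above. These minimal graphs should be those of Figure~\ref{fig: forbidden  subgraphs 2}.

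\textbf{Converse and main obstacle.} For the converse direction, I would check each listed graph directly. In each one I would identify the forced tips and exhibit either two adjacent forced tips, or a partitioned obstruction induced by the forced partition $(V\setminus T,T)$.

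The main obstacle is the exhaustive minimization in the previous step, that is, proving that the listed family is exactly the set of minimal obstructions. I would first handle condition (i) by case analysis on $|V(D_a)\cap V(D_b)|$. I would then handle condition (ii) obstruction by obstruction, using that a smaller obstruction found inside the union contradicts minimality, which prunes most cases.
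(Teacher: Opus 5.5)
Your reduction is correct. The forced-tip argument gives $T\subseteq N$ for every probe partition. Combined with the ``moreover'' clause of Theorem~\ref{thm: charaterization partitioned}, it shows that $G$ is probe diamond-free if and only if $T$ is independent and $(V\setminus T,T)$ is partitioned probe diamond-free, which settles the stated choice of $N$ and $F$. The paper uses the same device for Theorem~\ref{thm: new characterization}, applying Theorem~\ref{thm: charaterization partitioned} to the forced partition $(V(G)\setminus N_G,N_G)$; it gives no proof of the present statement and quotes it from \cite{BonomoFDDGSS}.

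\textbf{The gap.} The forbidden-subgraph characterization, which is the actual content of the theorem, is never proved. The direction ``not probe diamond-free $\Rightarrow$ contains a listed graph'' rests only on the claim that enumerating and minimizing ``should produce'' the list. Knowing that a minimal obstruction lies inside two diamonds makes the problem finite, but it does not tell you which graphs occur. This enumeration is genuinely error-prone: the paper notes that the list in \cite{BonomoFDDGSS} had $T_3\cong T_7$ and omitted $T_4$.

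Your predicted outcome is also partly wrong, which shows the analysis has not been carried out.
\begin{itemize}
\item $S_1$ and $S_2$ have adjacent tips, so they come from (i), not (ii). In $S_1$ the shared vertex is a tip of one diamond and adjacent to the tips of the other; in $S_2$ the unshared tips are adjacent. Since $T(G')\subseteq T(G)$ for every induced subgraph $G'$, neither can occur in $G$ at all once (i) holds.
\item Conversely, $\overline{P_3+K_2}=S_4$ and $S_3$ have independent tip sets; they are the unpartitioned $H_4$ and $H_5$. Their nonprobes are already tips inside $H$, so for (ii) no witnessing diamonds need to be attached. An obstruction with a probe tip cannot occur under $(V\setminus T,T)$ at all.
\end{itemize}

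\textbf{What case (i) needs.} The hard part is (i), and it requires lemmas you do not supply.
\begin{enumerate}
\item Excluding $\overline{P_4+K_1}$, $\overline{2K_2+K_1}$ and $\overline{P_3+2K_1}$ makes every component of every neighbourhood complete split (Lemma~\ref{lem: complete split}). Every later step uses this to force vertices into the clique side or the independent side.
\item A vertex that is a tip of one diamond and a non-tip of another yields $S_1$ or $S_4$. Hence the witnessing diamonds of adjacent tips $a,b$ are either disjoint or share only their other tip, and the latter case gives $S_2$.
\item For disjoint diamonds, no vertex has neighbours of both types in the opposite diamond. A vertex with two neighbours there yields $S_4$ or $S_2$.
\item Otherwise the cross edges form a matching containing $ab$. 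Up to symmetry, the possible matchings give exactly $T_1,\dots,T_{10}$.
\end{enumerate}
These arguments appear, under the LUCS hypothesis, in the paper's correctness proofs of Algorithms~\ref{algo: forbidden-locaclly-split-complete-free} and~\ref{algo:check_N_independent}. Without them, or an explicit exhaustive check, your proposal proves the ``moreover'' clause but not the characterization itself.
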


Let us also note that, in~\cite[Figure~3]{BonomoFDDGSS}, the graphs
$T_3$ and $T_7$ are isomorphic. This does not affect the stated number of
forbidden induced subgraphs, since one of them should be replaced by the graph
$T_4$ depicted in Figure~\ref{fig: forbidden  subgraphs}.
	
\section{Structural characterization of probe diamond-free graphs}\label{sec: characterization}
%
\subsection{Preliminaries}
A graph $G$ is \textcolor{black}{\emph{locally union of  complete split} (\emph{LUCS})} if, for every vertex $v\in V(G)$, each connected component of $G[N(v)]$ is a complete split graph. For each vertex $v \in V(G)$, let $C_v$ be a connected component of $G[N(v)]$ with \textcolor{black}{complete} split partition \textcolor{black}{$(K_{C_v},\, S_{C_v})$}. If \textcolor{black}{$|S_{C_v}| = 1$} we say that $C_v$ is a \emph{special component}\textcolor{black}{, that is,} $C_v$ is a complete graph. 

\textcolor{black}{
\begin{definition}\label{def:FG}
We define $F_G$ as the set of non-edges $uv$ of $G$ such that
$u,v\in S_{C_w}$ for some vertex $w\in V(G)$, where $C_w$ is a connected
component of $G[N(w)]$, and we set
\[
N_G=\bigcup_{uv\in F_G}\{u,v\}.
\]
\end{definition}
}

\textcolor{black}{
\begin{remark}
For every $uv\in F_G$, the vertices $w,u,v$ together with any vertex
of $K_{C_w}$ induce a diamond $D$ in $G$, where $u$ and $v$ are the tips of $D$.
\end{remark}
}
\begin{figure}
	\centering
	\includegraphics[scale = 0.7]{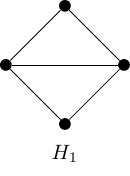}\quad \includegraphics[scale = 0.7]{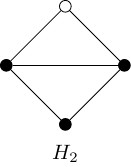}\quad \includegraphics[scale = 0.7]{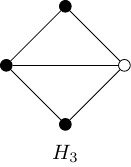}\quad \includegraphics[scale = 0.7]{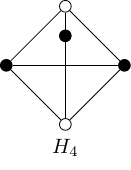}\quad \includegraphics[scale = 0.7]{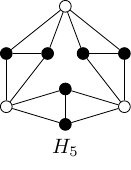}
	\caption{Minimal forbidden partitioned subgraphs for probe diamond-free graphs. Black vertices represent probes ($P$) and white vertices nonprobes ($N$).}
	\label{fig: forbidden partitioned subgraphs}
\end{figure}

By Lemma~\ref{lem: complete_split_characterization}, the following \textcolor{black}{result} holds.
\begin{lem}\label{lem: complete split}
A graph $G$ is \textcolor{black}{LUCS} if and only if it is $\{\overline{P_4 + K_1},\overline{P_3 + 2K_1},\overline{2K_2 + K_1}\}$-free.
\end{lem}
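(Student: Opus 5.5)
The plan is to reduce the global condition to a local one and then apply Lemma~\ref{lem: complete_split_characterization}. The key observation is that each of the three forbidden graphs is a cone. We have
\[
\overline{P_4+K_1}=K_1\vee \overline{P_4}=K_1\vee P_4,\qquad
\overline{P_3+2K_1}=K_1\vee\overline{P_3+K_1},\qquad
\overline{2K_2+K_1}=K_1\vee C_4 .
\]
Indeed, the isolated $K_1$ in each graph becomes a universal vertex after complementation. Hence a graph $H$ is an induced subgraph of $G$ with this cone structure exactly when some vertex $v$ has $N(v)$ inducing the base graph. So $G$ is free of the three graphs if and only if, for every $v$, the graph $G[N(v)]$ is $\{P_4,\ \overline{P_3+K_1},\ C_4\}$-free. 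Here $\overline{P_3+K_1}$ is the paw.

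The statement then reduces to the following claim. A graph $H$ has every connected component complete split if and only if $H$ is $\{P_4,\text{paw},C_4\}$-free.

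For the forward direction, each of $P_4$, the paw and $C_4$ is connected, so any induced copy lies inside a single component. Lemma~\ref{lem: complete_split_characterization} says a complete split graph is $\{\overline{P_3},C_4\}$-free. This excludes $C_4$ directly. It also excludes $P_4$, which contains $\overline{P_3}=K_2+K_1$ (take an end edge and the opposite end vertex). Likewise it excludes the paw, which contains $K_2+K_1$ (take the pendant vertex and the edge of the triangle not containing its neighbour).

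For the converse, let $C$ be a connected component of $H$. Since $C$ is $C_4$-free, by Lemma~\ref{lem: complete_split_characterization} it suffices to show that $C$ is $\overline{P_3}$-free. This is the main step. Suppose $C$ contains an induced $K_2+K_1$ on $\{a,b,c\}$ with edge $ab$. Choose such a triple with $c$ at minimum distance from $\{a,b\}$ within $C$, and let $a,b,x_1,\dots,c$ realise a shortest path. If the distance is at least $2$, the vertex $x$ on this path adjacent to $c$ also forms, with $a$ and $b$, an induced $K_2+K_1$ unless $x$ is adjacent to $a$ or $b$. Reselecting the triple as $a,b,x$ (or using $x$ and an adjacent path vertex) then contradicts minimality. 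So we may assume $c$ has a neighbour $x$ adjacent to $a$ or $b$, with $c$ nonadjacent to both $a$ and $b$. There are two cases. If $x$ is adjacent to both $a$ and $b$, then $\{a,b,x,c\}$ induces a paw. If $x$ is adjacent to exactly one of them, say $a$, then $c\,x\,a\,b$ induces a $P_4$. Either way we get a contradiction.

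The only delicate point is making the minimal-distance reduction rigorous. A simpler route is to take $c$ among the vertices nonadjacent to $a$ and $b$ that have a neighbour in $N(a)\cup N(b)$; such a vertex exists by connectivity, since $\{a,b\}\cup N(a)\cup N(b)$ cannot be all of $C$ while $c$ lies outside it. With this choice the two cases above close the argument. Combining the claim with the cone reduction gives the lemma.
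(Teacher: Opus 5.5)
Your proof is correct. For the substantive direction ($\Leftarrow$) it is essentially the paper's argument. Your vertex $c$, nonadjacent to $a,b$ and with a neighbour $x\in N(a)\cup N(b)$, is exactly the paper's pair $p_{\ell-2},p_{\ell-1}$ on a shortest path from the isolated vertex to the edge. Both yield an induced $P_4$ or paw inside the component, which the centre vertex $v$ turns into $\overline{P_4+K_1}$ or $\overline{P_3+2K_1}$; an induced $C_4$ gives $\overline{2K_2+K_1}$.

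Where you differ is the forward direction and the packaging. You identify the three graphs as the cones $K_1\vee P_4$, $K_1\vee\overline{P_3+K_1}$ and $K_1\vee C_4$. You then note that a connected induced $P_4$, paw or $C_4$ in $G[N(v)]$ lies in one component and contains $K_2+K_1$ or $C_4$. The paper instead says ($\Rightarrow$) ``follows immediately'' from Theorem~\ref{thm: charaterization non-partitioned}. That theorem only says that \emph{probe diamond-free} graphs avoid these three graphs. LUCS graphs need not be probe diamond-free: for instance $S_1$ is LUCS, as every neighbourhood component is $K_2$ or $P_3$. So that citation does not actually give the implication. Your cone argument (equivalently: LUCS is hereditary and none of the three graphs is LUCS) is what is really needed, so your version is the more complete one.

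One small slip: in your first minimal-distance attempt the threshold should be ``at least $3$''. Distance at least $2$ is automatic because $c$ is nonadjacent to $a$ and $b$. Your fallback choice of $c$ via connectivity is rigorous, though, and makes that paragraph unnecessary.
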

\begin{proof}
\textcolor{black}{
Recall that $G$ is LUCS if and only if, for every vertex $v\in V(G)$, each
connected component of $G[N(v)]$ is a complete split graph.
}

\textcolor{black}{
($\Rightarrow$) It follows immediately from Theorem~\ref{thm: charaterization non-partitioned}.
%
}

\textcolor{black}{
($\Leftarrow$) Conversely, assume that $G$ is
$\{\overline{P_4 + K_1},\overline{P_3 + 2K_1},\overline{2K_2 + K_1}\}$-free.
Fix a vertex $v\in V(G)$ and let $C$ be any connected component of $G[N(v)]$.
}
\textcolor{black}{
If $C$ contains an induced $K_2+K_1$, let $xy$ be the edge and $z$ the isolated
vertex (so $zx,zy\notin E(G)$). Since $C$ is connected, let
$P=z=p_0,p_1,\dots,p_\ell$ be a shortest path in $C$ from $z$ to $\{x,y\}$ and
assume $p_\ell=x$. Then $\ell\ge 2$ and $xp_{\ell-2}\notin E(G)$.
Let
}
\textcolor{black}{
\[
H=G[\{x,y,p_{\ell-2},p_{\ell-1}\}].
\]
}
\textcolor{black}{
By the minimality of $P$, we have $yp_{\ell-2}\notin E(G)$.
Hence $H$ induces either a $P_4$ or a paw, depending on whether
$yp_{\ell-1}\notin E(G)$ or $yp_{\ell-1}\in E(G)$.
Since $v$ is adjacent to every vertex of $C\subseteq N(v)$,
$G[\{v\}\cup V(H)]$ is isomorphic to $\overline{P_4+K_1}$ in the first case and
to $\overline{P_3+2K_1}$ in the second case, a contradiction.
}

\textcolor{black}{
If $C$ contains an induced $C_4$, then together with $v$ it yields an induced
$\overline{2K_2+K_1}$, a contradiction.
}

\textcolor{black}{
Hence $C$ is $\{K_2+K_1,\,C_4\}$-free, and by
Lemma~\ref{lem: complete_split_characterization} it follows that $C$ is complete
split. Since $v$ and $C$ were arbitrary, $G$ is LUCS.
}
\end{proof}

Notice that a diamond is a complete split graph. Consequently, if $v$ is a \textcolor{black}{vertex of degree three in an} induced diamond $D$ of a \textcolor{black}{LUCS} graph $G$, then $D-v$ is contained in a non-special component \textcolor{black}{$C_v$} of $G[N(v)]$\textcolor{black}{, and both tips of $D$ belong to $S_{C_v}$}.
\begin{remark}\label{rmk: tip of a dimond}
If $G$ is a \textcolor{black}{LUCS} graph, then the set of \textcolor{black}{vertices that are tips of induced diamonds of $G$ is exactly} $N_G$. \textcolor{black}{Moreover, if $G$ is probe diamond-free, then a probe completion of $G$ is obtained by adding the edge set $F_G$, as stated in Theorem~\ref{thm: charaterization non-partitioned}.}
\end{remark}

\tikzset{
  tnode/.style={circle, fill=black, inner sep=1.6pt},
  tedge/.style={line width=0.6pt}
}


\newcommand{\DrawBaseTwoDiamonds}{%
  \node[tnode] (t)  at (0,  1.6) {};
  \node[tnode] (l1) at (-1.2, 0.8) {};
  \node[tnode] (r1) at ( 1.2, 0.8) {};
  \node[tnode] (m1) at (0,  0.0) {};

  \node[tnode] (m2) at (0, -0.9) {};
  \node[tnode] (l2) at (-1.2,-1.7) {};
  \node[tnode] (r2) at ( 1.2,-1.7) {};
  \node[tnode] (b)  at (0, -2.5) {};

  \draw[tedge] (l1)--(r1);
  \draw[tedge] (t)--(l1);
  \draw[tedge] (t)--(r1);
  \draw[tedge] (m1)--(l1);
  \draw[tedge] (m1)--(r1);

  \draw[tedge] (m1)--(m2);

  \draw[tedge] (l2)--(r2);
  \draw[tedge] (m2)--(l2);
  \draw[tedge] (m2)--(r2);
  \draw[tedge] (b)--(l2);
  \draw[tedge] (b)--(r2);
}

\newcommand{\TOne}{%
\begin{tikzpicture}[scale=0.85]
  \DrawBaseTwoDiamonds
	\nlabel{t}{7}{left}	
	\nlabel{l1}{1}{above}
	\nlabel{r1}{2}{above}
	\nlabel{m1}{3}{left}
	\nlabel{m2}{4}{left}
	\nlabel{l2}{5}{below}
	\nlabel{r2}{6}{below}
	\nlabel{b}{8}{left}
\end{tikzpicture}}

\newcommand{\TTwo}{%
\begin{tikzpicture}[scale=0.85]
  \DrawBaseTwoDiamonds
  \draw[tedge] (l1)--(l2);
	\nlabel{t}{7}{left}	
	\nlabel{l1}{1}{above}
	\nlabel{r1}{3}{above}
	\nlabel{m1}{4}{left}
	\nlabel{m2}{5}{left}
	\nlabel{l2}{2}{below}
	\nlabel{r2}{6}{below}
	\nlabel{b}{8}{left}
\end{tikzpicture}}

\newcommand{\TThree}{%
\begin{tikzpicture}[scale=0.85]
  \DrawBaseTwoDiamonds
	\draw[bend left=25] (t) to (b);
	\nlabel{t}{4}{left}	
	\nlabel{l1}{1}{above}
	\nlabel{r1}{2}{above}
	\nlabel{m1}{3}{left}
	\nlabel{m2}{5}{left}
	\nlabel{l2}{7}{below}
	\nlabel{r2}{8}{below}
	\nlabel{b}{6}{left}	
\end{tikzpicture}}

\newcommand{\TFour}{%
\begin{tikzpicture}[scale=0.85]
  \DrawBaseTwoDiamonds
  \draw[tedge] (l1)--(b);
	\nlabel{t}{8}{left}	
	\nlabel{l1}{1}{above}
	\nlabel{r1}{2}{above}
	\nlabel{m1}{3}{left}
	\nlabel{m2}{5}{left}
	\nlabel{l2}{6}{below}
	\nlabel{r2}{7}{below}
	\nlabel{b}{4}{left}		
\end{tikzpicture}}

\newcommand{\TFive}{%
\begin{tikzpicture}[scale=0.85]
  \DrawBaseTwoDiamonds
	\draw[tedge] (l1)--(l2);
	\draw[tedge] (r1)--(r2);
	\nlabel{t}{7}{left}	
	\nlabel{l1}{1}{above}
	\nlabel{r1}{2}{above}
	\nlabel{m1}{5}{left}
	\nlabel{m2}{6}{left}
	\nlabel{l2}{3}{below}
	\nlabel{r2}{4}{below}
	\nlabel{b}{8}{left}		
\end{tikzpicture}}

\newcommand{\TSix}{%
\begin{tikzpicture}[scale=0.85]
  \DrawBaseTwoDiamonds
  \draw[tedge] (l1)--(l2);
	\draw[tedge] (r1)--(b);
	\nlabel{t}{8}{left}	
	\nlabel{l1}{1}{above}
	\nlabel{r1}{2}{above}
	\nlabel{m1}{4}{left}
	\nlabel{m2}{6}{left}
	\nlabel{l2}{3}{below}
	\nlabel{r2}{7}{below}
	\nlabel{b}{5}{left}		
\end{tikzpicture}}

\newcommand{\TSeven}{%
\begin{tikzpicture}[scale=0.85]
  \DrawBaseTwoDiamonds
  \draw[tedge] (l1)--(l2);
  \draw[bend left=25] (t) to (b);
	\nlabel{t}{5}{left}	
	\nlabel{l1}{1}{above}
	\nlabel{r1}{3}{above}
	\nlabel{m1}{4}{left}
	\nlabel{m2}{6}{left}
	\nlabel{l2}{2}{below}
	\nlabel{r2}{8}{below}
	\nlabel{b}{7}{left}	
\end{tikzpicture}}

\newcommand{\TEight}{%
\begin{tikzpicture}[scale=0.85]
  \DrawBaseTwoDiamonds
  \draw[tedge] (l1)--(b);
  \draw[tedge] (t)--(l2);
	\nlabel{t}{3}{left}	
	\nlabel{l1}{1}{above}
	\nlabel{r1}{5}{above}
	\nlabel{m1}{6}{left}
	\nlabel{m2}{8}{left}
	\nlabel{l2}{2}{below}
	\nlabel{r2}{7}{below}
	\nlabel{b}{4}{left}		
\end{tikzpicture}}

\newcommand{\TNine}{%
\begin{tikzpicture}[scale=0.85]
  \DrawBaseTwoDiamonds
  \draw[tedge] (l1)--(l2);
  \draw[tedge] (r1)--(r2);
  \draw[bend left=25] (t) to (b);	
	\nlabel{t}{6}{left}	
	\nlabel{l1}{1}{above}
	\nlabel{r1}{2}{above}
	\nlabel{m1}{5}{left}
	\nlabel{m2}{7}{left}
	\nlabel{l2}{3}{below}
	\nlabel{r2}{4}{below}
	\nlabel{b}{8}{left}			
\end{tikzpicture}}

\newcommand{\TTen}{%
\begin{tikzpicture}[scale=0.85]
  \DrawBaseTwoDiamonds
  \draw[tedge] (l1)--(l2);
  \draw[tedge] (t)--(r2);
  \draw[tedge] (b)--(r1);
	\nlabel{t}{5}{left}	
	\nlabel{l1}{1}{above}
	\nlabel{r1}{2}{above}
	\nlabel{m1}{6}{left}
	\nlabel{m2}{8}{left}
	\nlabel{l2}{3}{below}
	\nlabel{r2}{4}{below}
	\nlabel{b}{7}{left}		
\end{tikzpicture}}

\newcommand{\SOne}{%
\begin{tikzpicture}[scale=0.85]
\node[tnode] (a) at (-2,0) {};
\node[tnode] (b) at (-1,0.8) {};
\node[tnode] (c) at (-1,-0.8) {};
\node[tnode] (d) at (0,0) {};

\node[tnode] (e) at (1,0.8) {};
\node[tnode] (f) at (1,-0.8) {};
\node[tnode] (g) at (2,0) {};

\draw[tedge] (a)--(b)--(d)--(c)--(a);
\draw[tedge] (b)--(c);           
\draw[tedge] (d)--(g)--(e)--(d); 
\draw[tedge] (g)--(f)--(d);

\nlabel{d}{1}{above}	
\nlabel{b}{2}{left}	
\nlabel{c}{3}{left}
\nlabel{a}{7}{above}
\nlabel{g}{4}{above}
\nlabel{e}{5}{left}
\nlabel{f}{6}{left}
\end{tikzpicture}}

\newcommand{\STwo}{%
\begin{tikzpicture}[scale=0.85]
\node[tnode] (t)  at (0,1.35) {};
\node[tnode] (l)  at (-1.15,0.35) {};
\node[tnode] (r)  at (1.15,0.35) {};
\node[tnode] (bl) at (-0.65,-1.10) {};
\node[tnode] (br) at (0.65,-1.10) {};
\node[tnode] (il) at (-0.55,-0.10) {};
\node[tnode] (ir) at (0.55,-0.10) {};

\draw[tedge] (l)--(t)--(r);
\draw[tedge] (l)--(bl)--(br)--(r);
\draw[tedge] (l)--(il)--(bl);
\draw[tedge] (r)--(ir)--(br);
\draw[tedge] (t)--(il) (t)--(ir);

\nlabel{t}{1}{left}	
\nlabel{l}{2}{above}	
\nlabel{il}{3}{right}	
\nlabel{r}{4}{above}	
\nlabel{ir}{5}{left}	
\nlabel{bl}{6}{left}	
\nlabel{br}{7}{right}	

\end{tikzpicture}}

\newcommand{\SThree}{%
\begin{tikzpicture}[scale=0.85]
\node[tnode] (T)  at (0,1.70) {};
\node[tnode] (Lu) at (-1.35,0.75) {};
\node[tnode] (Ru) at (1.35,0.75) {};
\node[tnode] (Ll) at (-1.35,-0.55) {};
\node[tnode] (Rl) at (1.35,-0.55) {};
\node[tnode] (p)  at (-0.55,0.15) {};  
\node[tnode] (q)  at (0.55,0.15) {};   
\node[tnode] (C)  at (0,-0.10) {};     
\node[tnode] (B)  at (0,-0.95) {};     

\draw[tedge] (Lu)--(Ll) (Ru)--(Rl);
\draw[tedge] (T)--(Lu) (T)--(Ru);
\draw[tedge] (T)--(p) (T)--(q);
\draw[tedge] (p)--(Ll) (q)--(Rl);
\draw[tedge] (Ll)--(C)--(Rl);
\draw[tedge] (Ll)--(B)--(Rl);
\draw[tedge] (C)--(B);
\draw[tedge] (Lu)--(p) (Ru)--(q);

\nlabel{T}{1}{left}	
\nlabel{Ll}{2}{below}
\nlabel{Rl}{3}{below}	
\nlabel{Lu}{4}{above}	
\nlabel{p}{5\;}{above}	
\nlabel{Ru}{6}{above}	
\nlabel{q}{\;7}{above}	
\nlabel{C}{8}{above}	
\nlabel{B}{\;\;\;9}{above}	
\end{tikzpicture}}

\newcommand{\SFour}{%
\begin{tikzpicture}[scale=0.85]
\node[tnode] (top1) at (0,1.05) {};
\node[tnode] (bot1) at (0,-1.05) {};
\node[tnode] (L)   at (-1.10,0) {};
\node[tnode] (R)   at (1.10,0) {};
\node[tnode] (mid) at (0,0.35) {}; 

\draw[tedge] (top1)--(L)--(bot1)--(R)--(top1);
\draw[tedge] (L)--(R);      
\draw[tedge] (top1)--(mid)--(bot1); 

\nlabel{L}{2}{above}	
\nlabel{top1}{1}{left}
\nlabel{bot1}{4}{left}	
\nlabel{R}{3}{above}	
\nlabel{mid}{5}{left}	

\end{tikzpicture}}

\newcommand{\PFour}{%
\begin{tikzpicture}[scale=0.85]
\node[tnode] (a) at (0,0) {};
\node[tnode] (b) at (1,0) {};
\node[tnode] (c) at (2,0) {};
\node[tnode] (d) at (3,0) {};
\draw[tedge] (a)--(b)--(c)--(d);

\nlabel{a}{3}{above}	
\nlabel{b}{1}{above}
\nlabel{d}{4}{above}	
\nlabel{c}{2}{above}	
\end{tikzpicture}}

\newcommand{\Paw}{%
\begin{tikzpicture}[scale=0.85]
\node[tnode] (a) at (0,0) {};
\node[tnode] (b) at (1.2,0) {};
\node[tnode] (c) at (0.6,1.0) {};
\node[tnode] (p) at (-0.9,0) {}; 
\draw[tedge] (a)--(b)--(c)--(a) (a)--(p);

\nlabel{a}{1\;}{above}	
\nlabel{b}{\;3}{above}
\nlabel{p}{4}{above}	
\nlabel{c}{2}{left}
\end{tikzpicture}}

\newcommand{\CFour}{%
\begin{tikzpicture}[scale=0.85]
\node[tnode] (a) at (0,0) {};
\node[tnode] (b) at (1.2,0) {};
\node[tnode] (c) at (1.2,1.2) {};
\node[tnode] (d) at (0,1.2) {};
\draw[tedge] (a)--(b)--(c)--(d)--(a);

\nlabel{a}{1}{left}	
\nlabel{b}{3}{right}
\nlabel{d}{2}{left}	
\nlabel{c}{4}{right}	
\end{tikzpicture}}

\newcommand{\PFourUniversal}{%
\begin{tikzpicture}[scale=0.85]
\node[tnode] (a) at (0,0) {};
\node[tnode] (b) at (1,0) {};
\node[tnode] (c) at (2,0) {};
\node[tnode] (d) at (3,0) {};
\node[tnode] (u) at (1.5,1.2) {};
\draw[tedge] (a)--(b)--(c)--(d);
\draw[tedge] (u)--(a) (u)--(b) (u)--(c) (u)--(d);

\nlabel{a}{4}{above}	
\nlabel{b}{2\;}{above}
\nlabel{d}{5}{above}	
\nlabel{c}{\;3}{above}
\nlabel{u}{1}{above}
	
\end{tikzpicture}}

\newcommand{\PawUniversal}{%
\begin{tikzpicture}[scale=0.85]
\node[tnode] (a) at (0,0) {};
\node[tnode] (b) at (1.2,0) {};
\node[tnode] (c) at (0.6,1.0) {};
\node[tnode] (p) at (-0.9,0) {}; 
\node[tnode] (u) at (0.6,2.0) {}; 
\draw[tedge] (a)--(b)--(c)--(a) (a)--(p);
\draw[tedge] (u)--(a) (u)--(b) (u)--(c) (u)--(p);

\nlabel{a}{2\;\;}{above}	
\nlabel{b}{\;\;4}{above}
\nlabel{p}{5\;}{above}	
\nlabel{c}{3}{below}
\nlabel{u}{1}{right}
\end{tikzpicture}}

\newcommand{\CFourUniversal}{%
\begin{tikzpicture}[scale=0.85]
\node[tnode] (a) at (0,0) {};
\node[tnode] (b) at (1.2,0) {};
\node[tnode] (c) at (1.2,1.2) {};
\node[tnode] (d) at (0,1.2) {};
\node[tnode] (u) at (0.6,0.6) {};
\draw[tedge] (a)--(b)--(c)--(d)--(a);
\draw[tedge] (u)--(a) (u)--(b) (u)--(c) (u)--(d);

\nlabel{a}{2}{left}	
\nlabel{b}{4}{right}
\nlabel{d}{3}{left}	
\nlabel{c}{5}{right}
\nlabel{u}{1}{above}
	
\end{tikzpicture}}

\newcommand{\TOnePlus}{%
\begin{tikzpicture}[scale=0.85]
  \DrawBaseTwoDiamonds
	\nlabel{m1}{\textcolor{brown}{10}}{above}	
	\nlabel{m2}{\textcolor{brown}{10}}{below}	
	\nlabel{m1}{\textcolor{blue}{10}}{left}
	\nlabel{m2}{\textcolor{blue}{10}}{left}
\end{tikzpicture}}

\newcommand{\TTwoPlus}{%
\begin{tikzpicture}[scale=0.85]
  \DrawBaseTwoDiamonds
  \draw[tedge] (l1)--(l2);
	\nlabel{m1}{\textcolor{brown}{12}}{above}
	\nlabel{m2}{\textcolor{brown}{12}}{below}
  \nlabel{m1}{\textcolor{blue}{10}}{left}
	\nlabel{m2}{\textcolor{blue}{10}}{left}
	\nlabel{l1}{\textcolor{blue}{2}}{above}
	\nlabel{l2}{\textcolor{blue}{2}}{below}
\end{tikzpicture}}

\newcommand{\TThreePlus}{%
\begin{tikzpicture}[scale=0.85]
  \DrawBaseTwoDiamonds
	\draw[bend left=25] (t) to (b);
	\nlabel{m1}{\textcolor{brown}{20}}{above}	
	\nlabel{m2}{\textcolor{brown}{20}}{below}
	\nlabel{m1}{\textcolor{blue}{10}}{left}
	\nlabel{m2}{\textcolor{blue}{10}}{left}    
	\nlabel{t}{\textcolor{blue}{10}}{left}	
	\nlabel{b}{\textcolor{blue}{10}}{left}	
\end{tikzpicture}}

\newcommand{\TFourPlus}{%
\begin{tikzpicture}[scale=0.85]
  \DrawBaseTwoDiamonds
  \draw[tedge] (l1)--(b);
	\nlabel{m1}{\textcolor{brown}{16}}{above}
	\nlabel{m2}{\textcolor{brown}{17}}{below}
	\nlabel{m1}{\textcolor{blue}{10}}{left}
	\nlabel{m2}{\textcolor{blue}{10}}{right}  
	\nlabel{l1}{\textcolor{blue}{6}}{above}
	\nlabel{b}{\textcolor{blue}{7}}{left}		
\end{tikzpicture}}

\newcommand{\TFivePlus}{%
\begin{tikzpicture}[scale=0.85]
  \DrawBaseTwoDiamonds
	\draw[tedge] (l1)--(l2);
	\draw[tedge] (r1)--(r2);
	\nlabel{m1}{\textcolor{brown}{14}}{above}
	\nlabel{m2}{\textcolor{brown}{14}}{below}
	\nlabel{m1}{\textcolor{blue}{10}}{left}
	\nlabel{m2}{\textcolor{blue}{10}}{left}    
	\nlabel{l1}{\textcolor{blue}{2}}{above}
	\nlabel{r1}{\textcolor{blue}{2}}{above}
	\nlabel{l2}{\textcolor{blue}{2}}{below}
	\nlabel{r2}{\textcolor{blue}{2}}{below}
\end{tikzpicture}}

\newcommand{\TSixPlus}{%
\begin{tikzpicture}[scale=0.85]
  \DrawBaseTwoDiamonds
  \draw[tedge] (l1)--(l2);
	\draw[tedge] (r1)--(b);
	\nlabel{m1}{\textcolor{brown}{18}}{above}
	\nlabel{m2}{\textcolor{brown}{19}}{below}
	\nlabel{m1}{\textcolor{blue}{10}}{left}
	\nlabel{m2}{\textcolor{blue}{10}}{left}
	\nlabel{l1}{\textcolor{blue}{2}}{above}
	\nlabel{r1}{\textcolor{blue}{6}}{above}
	\nlabel{l2}{\textcolor{blue}{2}}{below}
	\nlabel{b}{\textcolor{blue}{7}}{left}		
\end{tikzpicture}}

\newcommand{\TSevenPlus}{%
\begin{tikzpicture}[scale=0.85]
  \DrawBaseTwoDiamonds
  \draw[tedge] (l1)--(l2);
  \draw[bend left=25] (t) to (b);
	\nlabel{m1}{\textcolor{brown}{22}}{above}
	\nlabel{m2}{\textcolor{brown}{22}}{below}
	\nlabel{m1}{\textcolor{blue}{10}}{left}
	\nlabel{m2}{\textcolor{blue}{10}}{left}
	\nlabel{t}{\textcolor{blue}{10}}{left}	
	\nlabel{l1}{\textcolor{blue}{2}}{above}
	\nlabel{l2}{\textcolor{blue}{2}}{below}
	\nlabel{b}{\textcolor{blue}{10}}{left}	
\end{tikzpicture}}

\newcommand{\TEightPlus}{%
\begin{tikzpicture}[scale=0.85]
  \DrawBaseTwoDiamonds
  \draw[tedge] (l1)--(b);
  \draw[tedge] (t)--(l2);
	\nlabel{m1}{\textcolor{brown}{23}}{above}
	\nlabel{m2}{\textcolor{brown}{23}}{below}
	\nlabel{m1}{\textcolor{blue}{10}}{right}
	\nlabel{m2}{\textcolor{blue}{10}}{right}
	\nlabel{t}{\textcolor{blue}{7}}{left}	
	\nlabel{l1}{\textcolor{blue}{6}}{above}
	\nlabel{l2}{\textcolor{blue}{6}}{below}
	\nlabel{b}{\textcolor{blue}{7}}{left}	
\end{tikzpicture}}

\newcommand{\TNinePlus}{%
\begin{tikzpicture}[scale=0.85]
  \DrawBaseTwoDiamonds
  \draw[tedge] (l1)--(l2);
  \draw[tedge] (r1)--(r2);
  \draw[bend left=25] (t) to (b);	
	\nlabel{m1}{\textcolor{brown}{24}}{above}
	\nlabel{m2}{\textcolor{brown}{24}}{below}
	\nlabel{m1}{\textcolor{blue}{10}}{left}
	\nlabel{m2}{\textcolor{blue}{10}}{left}
	\nlabel{t}{\textcolor{blue}{10}}{left}	
	\nlabel{l1}{\textcolor{blue}{2}}{above}
	\nlabel{r1}{\textcolor{blue}{2}}{above}
	\nlabel{l2}{\textcolor{blue}{2}}{below}
	\nlabel{r2}{\textcolor{blue}{2}}{below}
	\nlabel{b}{\textcolor{blue}{10}}{left}			
\end{tikzpicture}}

\newcommand{\TTenPlus}{%
\begin{tikzpicture}[scale=0.85]
  \DrawBaseTwoDiamonds
  \draw[tedge] (l1)--(l2);
  \draw[tedge] (t)--(r2);
  \draw[tedge] (b)--(r1);
	\nlabel{m1}{\textcolor{brown}{25}}{above}
	\nlabel{m2}{\textcolor{brown}{25}}{below}
	\nlabel{m1}{\textcolor{blue}{10}}{left}
	\nlabel{m2}{\textcolor{blue}{10}}{left}
	\nlabel{t}{\textcolor{blue}{7}}{left}	
	\nlabel{l1}{\textcolor{blue}{2}}{above}
	\nlabel{r1}{\textcolor{blue}{6}}{above}
	\nlabel{l2}{\textcolor{blue}{2}}{below}
	\nlabel{r2}{\textcolor{blue}{6}}{below}
	\nlabel{b}{\textcolor{blue}{7}}{left}		
\end{tikzpicture}}


\begin{figure}[t]
\centering
\begin{tabular}{ccc}
\PFour  & \Paw  & \CFour \\
(a) $P_4$  & (b) paw  & (c) $C_4$ \\
\\[-2mm]
\PFourUniversal  & \PawUniversal  & \CFourUniversal \\
(d) gem$=\overline{P_4+K_1}$  & (e) $\overline{P_3+2K_1}$  & (f) $W_4=\overline{2K_2+K_1}$
\end{tabular}
	\caption{Some graphs equipped with a degree--lexicographic ordering.
The last three are exactly the forbidden induced subgraphs for LUCS graphs and
also belong to the family of forbidden induced subgraphs for probe diamond-free graphs.
For each of these graphs, all degree--lexicographic orderings are equivalent.}
	\label{fig: forbidden  subgraphs 3}
\end{figure}

\begin{figure}[t]
\centering
\begin{tabular}{ccccc}
\TOne  & \TTwo  & \TThree  & \TFour & \TFive \\
$T_1$  & $T_2$  & $T_3$ & $T_4$  & $T_5$\\
\\[-2mm]
\TSix & \TSeven & \TEight & \TNine & \TTen \\
$T_6$  & $T_7$  & $T_8$  & $T_9$  & $T_{10}$

\end{tabular}
	\caption{The forbidden subgraphs $T_i$ for probe diamond-free graphs, each endowed with a degree--lexicographic ordering.}
	\label{fig: forbidden  subgraphs}
\end{figure}

\begin{definition}\label{def: bip aux}
	Let \textcolor{black}{$\phi:V(G)\to \{0,\dots,|V(G)|-1\}$ be a bijection, where $G$ is a LUCS graph}. We define the auxiliary bipartite graph \textcolor{black}{
\[
G^\phi_B=(A^\phi_B\cup N^\phi_G,E^\phi_B)
\]
as follows. Initially, $A^\phi_B=N^\phi_G=E^\phi_B=\emptyset$.
}	
For each vertex $v \in V(G)$ and each non-special component \textcolor{black}{$C_v$} of $G[N(v)]$ such that $\phi(v)<\phi(x)$ for every $x\in K_{C_v}$\textcolor{black}{, let} $w\in K_{C_v}$ \textcolor{black}{be} the vertex with minimum \textcolor{black}{$\phi$-value. Then}:
	\begin{itemize}
		\item \textcolor{black}{add} a vertex $a_{vw}$ to \textcolor{black}{$A^\phi_B$.}
		\item \textcolor{black}{set $N^\phi_G\leftarrow N^\phi_G\cup S_{C_v}$.}
		\item \textcolor{black}{add edges joining $a_{vw}$ to every vertex $s\in S_{C_v}$}.
	\end{itemize}
\end{definition}

The following observation \textcolor{black}{follows directly} from the definition.
\begin{remark}~\label{rem: pi}
Let $G$ be a \textcolor{black}{LUCS} graph \textcolor{black}{and let $v\in V(G)$}. If \textcolor{black}{$C_v$} is a non-special component of $G[N(v)]$, then \textcolor{black}{the clique $K_{C_v}\cup\{v\}$} is represented by exactly one vertex \textcolor{black}{$a_{uz}$ in $G^\phi_B$, where $u$ and $z$ are respectively the vertices with the minimum and second minimum $\phi$-values in $K_{C_v}\cup\{v\}$, for any bijection $\phi$.}

\textcolor{black}{Moreover:
\begin{itemize}
\item $(K_{C_v}\cup\{v\}\setminus\{u\},S_{C_v})$ is the unique complete split partition of the connected component of $G[N(u)]$ that contains $z$.
\item $N_G=N^\phi_G$.
\item Two vertices $x,y$ are tips of some induced diamond of $G$ if and only if their distance in $G^\phi_B$ is exactly $2$.
\item For any pair of bijections $\phi$ and $\phi'$, the graphs $G^\phi_B$ and $G^{\phi'}_B$ are isomorphic. Hence, we may simply denote them by $G_B$.
\end{itemize}
}
\end{remark}

\begin{proof}
\textcolor{black}{
Fix a bijection $\phi:V(G)\to\{0,\dots,|V(G)|-1\}$ and consider the construction
of $G_B^\phi$ in Definition~\ref{def: bip aux}.
Let $v\in V(G)$ and let $C_v$ be a non-special connected component of $G[N(v)]$.
Since $C_v$ is a connected complete split graph and $|S_{C_v}|\ge 2$, it has a
unique complete split partition $(K_{C_v},S_{C_v})$ and, moreover, $|K_{C_v}|\ge 1$.
Consequently, the graph $G[K_{C_v}\cup S_{C_v}\cup\{v\}]$ is also complete split
with unique complete split partition $(K_{C_v}\cup\{v\},\,S_{C_v})$, and
$|K_{C_v}\cup\{v\}|\ge 2$.
}

\textcolor{black}{
Let $u$ and $z$ be respectively the vertices with minimum and second minimum
$\phi$-values in the clique $K_{C_v}\cup\{v\}$.
Let $D$ be the connected component of $G[N(u)]$ that contains $z$.
Since $G$ is LUCS, $D$ is complete split; let $(K_D,S_D)$ denote its (unique)
complete split partition.
}

\textcolor{black}{
We first prove that $S_{C_v}\subseteq V(D)$.
Indeed, we have $S_{C_v}\subseteq N(u)$ because $u\in K_{C_v}\cup\{v\}$ is complete
to $S_{C_v}$, and similarly $S_{C_v}\subseteq N(z)$ because
$z\in K_{C_v}\cup\{v\}$.
Hence every vertex of $S_{C_v}$ is adjacent to both $u$ and $z$, and therefore is
adjacent to $z$ in $G[N(u)]$.
Thus $S_{C_v}\subseteq V(D)$.
Since $|S_{C_v}|\ge 2$ and $S_{C_v}$ is independent, $D$ is non-special and hence
$(K_D,S_D)$ is unique.
}

\textcolor{black}{
Next, we show that
\[
(K_{C_v}\cup\{v\})\setminus\{u\}\subseteq V(D).
\]
Indeed, every vertex
$w\in (K_{C_v}\cup\{v\})\setminus\{u,z\}$
is adjacent to $u$ and to $z$, because $K_{C_v}\cup\{v\}$ is a clique.
Hence $w\in N(u)$ and is adjacent to $z$ in $G[N(u)]$, so $w\in V(D)$.
}

\textcolor{black}{
Moreover, every vertex $w\in (K_{C_v}\cup\{v\})\setminus\{u\}$ must belong to $K_D$:
since $w$ is adjacent to all vertices of $S_{C_v}\subseteq V(D)$ and
$|S_{C_v}|\ge 2$, it cannot lie in $S_D$.
Therefore
\[
(K_{C_v}\cup\{v\})\setminus\{u\}\subseteq K_D
\qquad\textit{and}\qquad
S_{C_v}\subseteq S_D.
\]
}

\textcolor{black}{
Finally, we prove that $V(D)$ has no additional vertices.
Suppose that there exists
\[
x\in V(D)\setminus\Bigl(((K_{C_v}\cup\{v\})\setminus\{u\})\cup S_{C_v}\Bigr).
\]
Then $x\neq u$ (since $u\notin N(u)$) and $x$ is adjacent to $z$, because
$z\in (K_{C_v}\cup\{v\})\setminus\{u\}\subseteq K_D$ and every vertex of $D$
is adjacent to all vertices of $K_D$.
}

\textcolor{black}{
We claim that $x$ is also adjacent to $v$.
If $u=v$, then $x\in N(u)=N(v)$ and the claim holds.
Otherwise, $v\in (K_{C_v}\cup\{v\})\setminus\{u\}\subseteq K_D$, so again
$x$ is adjacent to $v$.
Thus $x\in N(v)$.
Since $x$ is adjacent to $z\in V(C_v)$, the vertex $x$ lies in the same connected
component of $G[N(v)]$ as $z$, namely in $C_v$.
Therefore $x\in V(C_v)=K_{C_v}\cup S_{C_v}$, contradicting the choice of $x$.
}

\textcolor{black}{
Hence
\[
K_D=(K_{C_v}\cup\{v\})\setminus\{u\}
\qquad\textit{and}\qquad
S_D=S_{C_v}.
\]
}

\textcolor{black}{
By Definition~\ref{def: bip aux}, when processing the pair $(u,D)$ the
construction adds exactly one vertex $a_{uw}$, where $w$ is the vertex of
minimum $\phi$-value in $K_D$.
Since $z$ is the second minimum in $K_{C_v}\cup\{v\}$, we have $w=z$,
and therefore the representative is $a_{uz}$.
Uniqueness follows because such a representative is created only for the unique
minimum-$\phi$ vertex $u$ of the clique $K_{C_v}\cup\{v\}$.
}

\textcolor{black}{
\smallskip
\noindent
\emph{Item (1).}
Let $D$ be the connected component of $G[N(u)]$ that contains $z$.
Then $(K_{C_v}\cup\{v\}\setminus\{u\},\,S_{C_v})$ is the unique complete split
partition of $D$.
}

\textcolor{black}{
\smallskip
\noindent
\emph{Item (2).}
By Definition~\ref{def: bip aux}, a vertex is added to $N_G^\phi$ if and only if
it belongs to $S_{C_t}$ for some non-special component $C_t$ of $G[N(t)]$.
Equivalently, this happens if and only if it is an endpoint of a non-edge
$xy$ with $\{x,y\}\subseteq S_{C_t}$, that is, if and only if it belongs to
$N_G$ by the definition of $F_G$.
Hence $N_G^\phi=N_G$.
}

\textcolor{black}{
\smallskip
\noindent
\emph{Item (3).}
Assume that $x$ and $y$ are tips of an induced diamond in $G$.
Then there exists a vertex $t$ of degree three in that diamond such that
$x,y\in N(t)$ and $xy\notin E(G)$.
Since $G$ is LUCS, the vertices $x$ and $y$ lie in the independent side
$S_{C_t}$ of the same non-special component $C_t$ of $G[N(t)]$.
By Definition~\ref{def: bip aux}, the construction creates a vertex
$a\in A_B^\phi$ adjacent to every vertex of $S_{C_t}$, in particular to both
$x$ and $y$.
Hence $\mathrm{dist}_{G_B^\phi}(x,y)=2$.
}

\textcolor{black}{
Conversely, if $\mathrm{dist}_{G_B^\phi}(x,y)=2$, then $x$ and $y$ have a common
neighbor $a_{tw}\in A_B^\phi$.
By construction, this implies that $x,y\in S_{C_t}$ for some non-special
component $C_t$ of $G[N(t)]$, where $w\in K_{C_t}$, and that $xy\notin E(G)$.
Hence both $t$ and $w$ are adjacent to $x$ and $y$, while $x$ and $y$ are
nonadjacent.
Therefore, $G[\{t,w,x,y\}]$ is an induced diamond with tips $x$ and $y$.
}

\textcolor{black}{
\smallskip
\noindent
\emph{Item (4).}
Let $\phi$ and $\phi'$ be two bijections.
The construction of $G_B^\phi$ depends only on which vertex is the minimum and
second minimum (within each clique $K_{C_v}\cup\{v\}$) and on the associated
independent side $S_{C_v}$.
Thus changing $\phi$ only relabels the vertices of $A_B^\phi$ without modifying
the incidence relation between $A_B^\phi$ and $N_G$.
Therefore, there is a natural bijection between $A_B^\phi$ and $A_B^{\phi'}$
mapping each representative of a clique to the representative of the same
clique and fixing $N_G$ pointwise.
This bijection preserves adjacency, and hence
$G_B^\phi$ and $G_B^{\phi'}$ are isomorphic.
}
\end{proof}

\subsection{New characterization for probe diamond-free graphs}

\textcolor{black}{The following lemma is a direct consequence of Lemma~\ref{lem: complete split} and
Theorem~\ref{thm: charaterization non-partitioned}.}
\begin{lem}\label{lem: probe diamond-free implies locally complete split}
	If $G = (V, E)$ is probe diamond-free, then $G$ is a \textcolor{black}{LUCS} graph.
\end{lem}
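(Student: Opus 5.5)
The plan is to chain Theorem~\ref{thm: charaterization non-partitioned} with Lemma~\ref{lem: complete split}. First, since $G$ is probe diamond-free, Theorem~\ref{thm: charaterization non-partitioned} tells us that $G$ is $\{\overline{P_3 + 2K_1},\, \overline{P_4 + K_1},\, \overline{P_3 + K_2},\, \overline{2K_2 + K_1}\}$-free. In particular, $G$ is free of the smaller family $\{\overline{P_4 + K_1},\overline{P_3 + 2K_1},\overline{2K_2 + K_1}\}$. Then the ($\Leftarrow$) direction of Lemma~\ref{lem: complete split} gives that $G$ is LUCS.

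To keep the argument independent of how the ($\Rightarrow$) direction of Lemma~\ref{lem: complete split} was phrased, I would use only its ($\Leftarrow$) direction, which was proved directly. That proof fixes a vertex $v$ and a component $C$ of $G[N(v)]$. An induced $K_2+K_1$ in $C$, after a shortest-path argument, yields a $P_4$ or paw; together with $v$ this gives a gem or $\overline{P_3+2K_1}$. An induced $C_4$ in $C$ together with $v$ gives $W_4$. Lemma~\ref{lem: complete_split_characterization} then finishes.

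As a sanity check that does not rely on the cited theorem, one could verify directly that each of the three graphs is itself not probe diamond-free. Take the gem, $W_4$ and $\overline{P_3+2K_1}$, with the universal vertex $u$. In each there are several induced diamonds through $u$ whose tips would all have to be nonprobes. These tip sets force two adjacent vertices into $N$, contradicting that $N$ is independent. For instance, in $W_4$ with rim $1234$, the diamonds $u,1,2,3$ and $u,2,3,4$ force $1,3,2,4$ all into $N$, yet $12$ is an edge.

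The main (minor) obstacle is ensuring no circularity: the ($\Rightarrow$) direction of Lemma~\ref{lem: complete split} itself cited Theorem~\ref{thm: charaterization non-partitioned}. For that reason I invoke the theorem directly for forbiddenness and only the independently proved ($\Leftarrow$) direction of the lemma.
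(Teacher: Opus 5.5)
Your proposal is correct and follows the paper's argument: the paper presents the lemma as a direct consequence of Theorem~\ref{thm: charaterization non-partitioned}, which makes $G$ free of $\overline{P_4+K_1}$, $\overline{2K_2+K_1}$ and $\overline{P_3+2K_1}$, combined with the ($\Leftarrow$) direction of Lemma~\ref{lem: complete split}. Your extra observation is also valid: in each of the three graphs, the induced diamonds through the universal vertex force two adjacent vertices into $N$, so this step can be checked without citing the theorem.
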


\begin{figure}[t]
\centering
\begin{tabular}{cccc}
\SOne  & \STwo  & \SThree & \SFour \\
$S_1$  & $S_2$  & $S_3$   & $S_4=\overline{P_3 + K_2}$
\end{tabular}
\caption{The forbidden subgraphs $S_i$ for probe diamond-free graphs, whose vertices are labeled according to a degree--lexicographic ordering. Subgraphs $S_4$ and $S_3$ correspond to the unpartitioned versions of $H_4$ and $H_5$, respectively. For $S_2$ and $S_3$, all degree--lexicographic orderings are equivalent}
\label{fig: forbidden  subgraphs 2}
\end{figure}


Using Lemma~\ref{lem: complete split} and a careful inspection of the proof of~\cite[Lemma~5]{BonomoFDDGSS}, we obtain the following \textcolor{black}{result}.
\begin{lem}\label{lem: N_G stable set two} 
Let $G$ be a \textcolor{black}{LUCS} graph. If $N_G$ is not an independent set, then $G$ contains
$\overline{P_3 + K_2}$, or one of the graphs depicted in
Figure~\ref{fig: forbidden  subgraphs}, or $S_1$ or $S_2$
(see Figure~\ref{fig: forbidden  subgraphs 2}) as an induced subgraph.
\end{lem}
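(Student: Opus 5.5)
If $N_G$ is not independent, there is an edge $xy$ with $x,y\in N_G$. By Remark~\ref{rmk: tip of a dimond}, $x$ is a tip of an induced diamond $D_1$ and $y$ is a tip of an induced diamond $D_2$. Write $D_1=\{x,x',a_1,b_1\}$ with $xx'\notin E$ and $a_1,b_1$ its degree-three vertices, and similarly $D_2=\{y,y',a_2,b_2\}$. By Remark~\ref{rem: pi}, $x,x'\in S_{C_{a_1}}$ for a non-special component $C_{a_1}$ of $G[N(a_1)]$ whose clique side contains $b_1$; the same holds for $y,y'$ and $a_2$. The plan is to pick $D_1,D_2$ minimizing $|V(D_1)\cup V(D_2)|$ and then do a case analysis on how $D_1$ and $D_2$ overlap and interact. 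In every case the LUCS property should be used to eliminate most adjacency patterns, since it forbids gem, $\overline{P_3+2K_1}$ and $W_4$ by Lemma~\ref{lem: complete split}. This follows the structure of the proof of \cite[Lemma~5]{BonomoFDDGSS}.

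\textbf{Overlapping case.} Suppose first that $y\in V(D_1)$. Since $xy\in E$ and $y\ne x'$, we may assume $y=a_1$. Then $y$ is both a tip of $D_2$ and a central vertex of $D_1$. I would examine $N(y)$: it contains $x,x',b_1$. Locally, the complete split structure of the component of $G[N(y)]$ containing $x$ forces constraints on $y'$, $a_2$ and $b_2$ relative to $x,x',b_1$. Each admissible configuration should yield $\overline{P_3+K_2}=S_4$, or a small graph such as $S_1$ or $S_2$. Any remaining configuration should contradict LUCS by exhibiting a gem, $\overline{P_3+2K_1}$ or $W_4$ inside the neighbourhood of some vertex.

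\textbf{Disjoint case.} When the vertex sets are disjoint, the minimal configuration is the two diamonds joined by the edge $xy$, which is $T_1$. Every further edge between $D_1$ and $D_2$ must then be analysed. The key tool is that if some vertex of $D_2$ is adjacent to two nonadjacent vertices of $D_1$ (for example $x$ and $x'$), then either a smaller pair of diamonds exists, contradicting minimality, or the neighbourhood of some vertex contains a forbidden LUCS pattern. What survives are exactly the edge patterns $T_2,\dots,T_{10}$ of Figure~\ref{fig: forbidden  subgraphs} (up to the symmetry swapping $D_1$ and $D_2$, $a_i$ and $b_i$, and the two tips), or $S_1$ and $S_2$.

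\textbf{Main obstacle.} The hardest part is the bookkeeping in the disjoint case: there are $16$ potential cross pairs. I would organize it by the number of cross edges incident to $\{a_1,b_1\}$ versus to the tips. I would first use minimality to show that $a_i,b_i$ have no neighbours among the centers of the other diamond. This is because $a_1a_2\in E$ together with the tips would create a smaller diamond pair, or else a gem in $N(a_1)$. The remaining adjacencies are then tip-to-tip and tip-to-center, and I would enumerate them by hand, matching each outcome against $T_1,\dots,T_{10}$.
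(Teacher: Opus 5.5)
Your overall framework is the right one: two induced diamonds $D_1,D_2$ with adjacent tips $x,y$, followed by a LUCS-driven analysis of how they overlap and which cross edges occur. The paper itself only points to an inspection of the proof of Lemma~5 in \cite{BonomoFDDGSS}; the explicit argument is essentially the one in the correctness proofs of Algorithms~\ref{algo: forbidden-locaclly-split-complete-free} and~\ref{algo:check_N_independent}.

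However, the one concrete reduction you propose for the disjoint case is false. Minimality of $|V(D_1)\cup V(D_2)|$ does not exclude edges between centers of the two diamonds. Take $G=T_2$: two vertex-disjoint diamonds joined by a tip--tip edge and a center--center edge. Every neighbourhood in $T_2$ induces $K_2$, $P_3$, $K_2+K_1$ or $P_3+K_1$, so $T_2$ is LUCS. The two adjacent centers have no common neighbour, so that edge lies in no triangle. Hence the drawn diamonds are the only induced diamonds, and neither a smaller pair nor a gem appears. The same happens in $T_5,T_6,T_7,T_9,T_{10}$. If you first ``prove'' that centers see no centers of the other diamond and then enumerate only tip--tip and tip--center edges, you can reach only $T_1,T_3,T_4,T_8$. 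That contradicts the list you say survives, so center--center edges must stay in the enumeration. Your ``key tool'' dichotomy is also incomplete: a vertex of $D_2$ adjacent to exactly the two tips of $D_1$ gives $\overline{P_3+K_2}$ directly, with no smaller pair and no LUCS violation.

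Your case split is not exhaustive either. Besides $y\in V(D_1)$ (or $x\in V(D_2)$) and vertex-disjointness, the diamonds may share a vertex outside $\{x,y\}$. The two other tips may coincide, which is precisely where $S_2$ comes from. Or the other tip of one diamond may be a center of the other. Under your own minimality, $S_1$ and $S_2$ cannot appear in the disjoint case, since each contains a pair of diamonds with adjacent tips on seven vertices.

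A complete argument goes as follows.

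(i) Suppose a vertex $v^\star$ is a tip of a diamond $D$ and a center of a diamond $D'$; this covers $y=a_1$ as well as $x'=a_2$. A two-step LUCS argument, first in $G[N(v^\star)]$ and then in $G[N(u)]$ for a center $u$ of $D$ in the clique side, shows $V(D)\cap V(D')=\{v^\star\}$. It also shows that the centers of $D$ have no neighbours in $D'-v^\star$. This gives $S_1$, or $S_4$ when the other tip of $D$ has a neighbour in $D'$.

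(ii) Otherwise a shared center is impossible, since it would place the adjacent vertices $x,y$ in the independent side of one component. So $D_1,D_2$ share at most their other tips, and then they induce exactly $S_2$.

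(iii) In the disjoint case, no vertex has both a tip and a center neighbour in the opposite diamond. LUCS at that center puts the vertex in the clique side, and then either LUCS at the vertex fails or (i) applies. Two tip neighbours give $\overline{P_3+K_2}$. Two center neighbours give a pair of diamonds sharing a tip, hence $S_2$. Otherwise the cross edges form a matching containing $xy$. Up to symmetry there are exactly ten such matchings, center--center edges included, and they are exactly $T_1,\dots,T_{10}$.
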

%
\textcolor{black}{The next} result will be useful to detect subgraphs isomorphic to $\overline{P_3 + K_2}$.
\begin{lem}\label{lem: neighborhood of vertices in N_G}
Let $G$ be a \textcolor{black}{LUCS} graph such that $N_G$ is an \textcolor{black}{independent} set. Then
$G[N(v)]$ is $P_3$-free for every vertex $v\in N_G$. 
Additionally, if \textcolor{black}{$K\subseteq V(G)\setminus N_G$} is a clique such that
\textcolor{black}{$|N(v)\cap K|\ge 2$ for some} $v\in N_G$, then $v$ is adjacent to \textcolor{black}{every vertex of $K$}.
\end{lem}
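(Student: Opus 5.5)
The plan is to derive both assertions from one observation. In a LUCS graph, any induced $P_3$ $x$--$y$--$z$ inside $G[N(t)]$ forces both endpoints $x,z$ into $N_G$. Indeed, $x,y,z$ lie in the same connected component $C_t$ of $G[N(t)]$, since $y$ joins them. By the LUCS property, $C_t$ is complete split with complete split partition $(K_{C_t},S_{C_t})$. The vertices $x$ and $z$ are nonadjacent, and two vertices of $K_{C_t}$ are adjacent while $K_{C_t}$ is complete to $S_{C_t}$, so neither can lie in $K_{C_t}$. Hence $x,z\in S_{C_t}$, so $|S_{C_t}|\ge 2$ and $C_t$ is non-special. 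By Definition~\ref{def:FG}, $xz\in F_G$, and therefore $x,z\in N_G$. (Equivalently, $\{t,y,x,z\}$ induces a diamond with tips $x,z$, and one may appeal to Remark~\ref{rmk: tip of a dimond}.)

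For the first assertion, let $v\in N_G$ and suppose $G[N(v)]$ contains an induced $P_3$ $x$--$y$--$z$. Applying the observation with $t=v$ gives $x\in N_G$. But $x\in N(v)$, so $x$ and $v$ are two adjacent vertices of $N_G$. This contradicts the hypothesis that $N_G$ is independent, so $G[N(v)]$ is $P_3$-free.

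For the second assertion, let $K\subseteq V(G)\setminus N_G$ be a clique and $v\in N_G$ with two distinct neighbours $a,b\in K$. Suppose some $c\in K$ is not adjacent to $v$; then $c\neq a,b$. Work in $G[N(a)]$: it contains $v$ (since $va\in E$) and also $b,c$ (since $K$ is a clique). The edges $vb$ and $bc$ are present while $vc$ is absent, so $v$--$b$--$c$ is an induced $P_3$ in $G[N(a)]$. The observation with $t=a$ then yields $c\in N_G$, contradicting $K\cap N_G=\emptyset$. Hence $v$ is adjacent to every vertex of $K$.

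I expect no serious obstacle. The only point needing care is the observation: nonadjacent vertices of a complete split component must both lie on the independent side. This uses only the definition of a complete split partition, which avoids relying on the uniqueness statement for non-special components.
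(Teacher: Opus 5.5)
Your proof is correct and takes essentially the same route as the paper. The paper notes that an induced $P_3$ in $G[N(v)]$, respectively the set $\{v,w\}\cup(N(v)\cap K)$ with $w$ a non-neighbour of $v$ in $K$, gives an induced diamond whose tips lie in $N_G$ by Remark~\ref{rmk: tip of a dimond}; your version differs only in checking that fact directly from the complete split partition and Definition~\ref{def:FG}, which makes the argument self-contained.
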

\begin{proof}
\textcolor{black}{
Suppose towards a contradiction that $G[N(v)]$ contains an induced $P_3$.
Then its three vertices together with $v$ induce a diamond $D$.
Since $v$ is adjacent to all three vertices of $D$, it is not a tip of $D$;
hence the tips of $D$ are exactly the two endpoints of the missing edge.
By Remark~\ref{rmk: tip of a dimond}, both tips belong to $N_G$, and since $v\in N_G$
and is adjacent to them, this contradicts that $N_G$ is an independent set.
}

\textcolor{black}{
Now suppose that there exists a clique $K\subseteq V(G)\setminus N_G$ with
$|N(v)\cap K|\ge 2$, and let $w\in K\setminus N(v)$.
Then $v$ and $w$ are nonadjacent, while both are adjacent to every vertex in
$N(v)\cap K$, which has size at least two.
Thus $\{v,w\}\cup (N(v)\cap K)$ contains a diamond whose missing edge is $vw$,
so $w$ is a tip of an induced diamond of $G$.
By Remark~\ref{rmk: tip of a dimond}, $w\in N_G$, contradicting
$K\subseteq V(G)\setminus N_G$.
}
\end{proof}
\begin{lem}\label{lem: C_6}
Let $G$ be a \textcolor{black}{LUCS} $\overline{P_3 +  K_2}$-free graph such that $N_G$ is an
\textcolor{black}{independent} set. \textcolor{black}{Then $G = ((V(G)\setminus N_G)\cup N_G, E(G))$ contains $H_5$ as a partitioned subgraph if and only if
$G^\phi_B$ contains an induced $C_6$, for any bijection $\phi$}. 
\end{lem}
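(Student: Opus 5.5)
The plan is to prove the two directions separately, using Remark~\ref{rem: pi} to move between cliques of $G$ and vertices of $A_B$. From the vertex labelling of $S_3$, I read $H_5$ (whose unpartitioned version is $S_3$) as follows: three nonprobes $x_1,x_2,x_3$ and three disjoint pairs of adjacent probes $\{p_i,q_i\}$, $i\in\{1,2,3\}$, with $\{p_i,q_i\}$ complete to $\{x_i,x_{i+1}\}$ (indices mod $3$). Thus $\{x_i,x_{i+1},p_i,q_i\}$ is a diamond with tips $x_i,x_{i+1}$.

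A preliminary observation is used in both directions. Suppose $K\cup\{v\}$ is a clique with $|K\cup\{v\}|\ge 2$ that is complete to an independent set $S$ with $|S|\ge 2$. Then no vertex of $K\cup\{v\}$ lies in $N_G$, because such a vertex would be adjacent to a vertex of $S\subseteq N_G$, while $N_G$ is independent.

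\emph{($\Leftarrow$).} Let $x_1a_1x_2a_2x_3a_3x_1$ be an induced $C_6$ in $G^\phi_B$, with $x_i\in N_G$ and $a_i\in A_B$. By Remark~\ref{rem: pi}, $a_i$ represents a clique $Q_i=K_{C_v}\cup\{v\}$ with $|Q_i|\ge 2$ that is complete to an independent set $S_i\supseteq\{x_i,x_{i+1}\}$. Since the cycle is induced, $x_{i+2}\notin S_i$. By the preliminary observation, $Q_i\subseteq V(G)\setminus N_G$. If $x_{i+2}$ had at least two neighbours in $Q_i$, Lemma~\ref{lem: neighborhood of vertices in N_G} would make $x_{i+2}$ complete to $Q_i$. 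Then $x_{i+2}$ would lie in the same component of $G[N(u)]$ as $S_i$, for $u$ the $\phi$-minimum of $Q_i$, and hence in $S_i$, a contradiction. So $x_{i+2}$ has at most one neighbour in $Q_i$.

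Next I pick $p_i,q_i\in Q_i$ and show the six probes are distinct. Suppose $k\in Q_1\cap Q_2$. Then $x_1,x_2,x_3\in N(k)$, and $(Q_1\cup Q_2)\setminus\{k\}$ lies in the component of $G[N(k)]$ containing $x_2$. That component is complete split, so $Q_1\cup Q_2$ is a single clique complete to $\{x_1,x_2,x_3\}$. This would merge $a_1$ and $a_2$ and create a chord, a contradiction. With disjoint $Q_i$ of size at least $2$, the required edges of $H_5$ are present, so $H_5$ is a partitioned subgraph. Extra edges are harmless if ``subgraph'' is meant non-induced. If it must be induced, I will use $\overline{P_3+K_2}$-freeness to rule out the possible chords $p_ix_{i+2}$ and edges between $Q_i$ and $Q_j$.

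\emph{($\Rightarrow$).} Given a copy of $H_5$, each diamond $\{x_i,x_{i+1},p_i,q_i\}$ puts $x_i,x_{i+1}$ in $S_{C}$ for the component $C$ of $G[N(p_i)]$ containing $q_i$. By Remark~\ref{rem: pi}, this yields a vertex $a_i\in A_B$ adjacent to $x_i$ and $x_{i+1}$. The closed walk $x_1a_1x_2a_2x_3a_3$ is an induced $C_6$ unless some $a_i$ is adjacent to $x_{i+2}$; this also covers the case $a_i=a_j$, which produces such a chord.

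So the main obstacle is to exclude a clique $Q\supseteq\{p_i,q_i\}$ complete to all of $x_1,x_2,x_3$. My first attempt will take $k\in Q$ together with the other diamond, $\{x_{i+1},x_{i+2},p_{i+1},q_{i+1}\}$. By Lemma~\ref{lem: neighborhood of vertices in N_G}, each $x_j$ is adjacent to at most one vertex of any probe clique it is not complete to. This should let me choose five vertices among $x_{i+1},x_{i+2},k,p_{i+1},q_{i+1}$ (or with $x_i$ replacing one of them) that induce $\overline{P_3+K_2}$, contradicting the hypothesis. If the adjacencies between $Q$ and $\{p_{i+1},q_{i+1}\}$ obstruct this, I will instead show $p_{i+1}$ or $q_{i+1}$ is complete to $Q$. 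Then $Q\cup\{p_{i+1},q_{i+1}\}$ is one clique, and its representative is adjacent to $x_{i+1},x_{i+2}$ via the uniqueness in Remark~\ref{rem: pi}, and I iterate. Finally, independence of the choice of $\phi$ follows from Remark~\ref{rem: pi}(4).
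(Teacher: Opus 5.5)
Your ($\Leftarrow$) direction is sound, and in fact more careful than the paper's one-line conclusion. The chord exclusion you postpone does go through. A vertex $p\in Q_i$ adjacent to $x_{i+2}$ is adjacent to both tips of the diamond $\{x_{i+1},x_{i+2},p_{i+1},q_{i+1}\}$, so $\overline{P_3+K_2}$-freeness makes it adjacent to $p_{i+1}$ or $q_{i+1}$. The cluster structure of $G[N(x_{i+1})]$ together with Lemma~\ref{lem: neighborhood of vertices in N_G} then makes $x_{i+2}$ complete to $Q_i$, contradicting your ``at most one neighbour'' claim.

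The gap is in ($\Rightarrow$). You correctly reduce to excluding a clique $Q\supseteq\{p_i,q_i\}$ complete to $\{x_1,x_2,x_3\}$. However, the argument you plan cannot produce a contradiction. Whenever $k$ sees $p_{i+1}$ or $q_{i+1}$ you merge and iterate, and the iteration simply ends with one clique containing all six probes and complete to all three $x_j$. None of LUCS, independence of $N_G$, or $\overline{P_3+K_2}$-freeness forbids this. Concretely, let $G$ consist of six pairwise adjacent vertices, all adjacent to three pairwise nonadjacent vertices $x_1,x_2,x_3$. Then:
\begin{itemize}
\item $G$ is LUCS and $N_G=\{x_1,x_2,x_3\}$ is independent;
\item $\overline{G}=K_3+6K_1$ has no induced $P_3$, so $G$ is $\overline{P_3+K_2}$-free;
\item $G$ contains every edge of $H_5$;
\item yet $G_B\cong K_{1,3}$ has no $C_6$.
\end{itemize}

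What you are missing is the non-edges of $H_5$. The lemma only holds with $H_5$ an \emph{induced} partitioned subgraph, and that is also all Theorem~\ref{thm: charaterization partitioned} needs. With induced $H_5$ the obstacle vanishes immediately; this is essentially the paper's argument, which uses $u_{i,j},z_{i,j}\notin N(v_k)$. By Remark~\ref{rem: pi}, the representative $a_i$ of the clique $K_C\cup\{p_i\}$ is adjacent in $G_B$ precisely to $S_C\subseteq N(p_i)$. In an induced $H_5$ we have $x_{i+2}\notin N(p_i)$, so $x_{i+2}\notin N_{G_B}(a_i)$. Hence $a_1,a_2,a_3$ have pairwise different neighbourhoods in $\{x_1,x_2,x_3\}$, so they are distinct, and the $6$-cycle is induced. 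In your notation, $p_i$ itself is a vertex of $Q$ not adjacent to $x_{i+2}$.

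For the same reason, drop the remark that extra edges are harmless under a non-induced reading. Under that reading ($\Rightarrow$) is false, so in ($\Leftarrow$) your chord-exclusion step is mandatory, not optional.
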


\begin{proof}
\textcolor{black}{
Assume first that
$G=((V(G)\setminus N_G)\cup N_G,E(G))$ contains $H_5$ as a partitioned subgraph.
Let $v_1,v_2,v_3\in N_G$ be the three vertices of $N_G$ in this copy of $H_5$,
and let $u_{1,2},z_{1,2},u_{2,3},z_{2,3},u_{3,1},z_{3,1}\in V(G)\setminus N_G$
be the remaining vertices, as depicted in
Figure~\ref{fig: forbidden partitioned subgraphs}, where
\[
N(u_{i,j})\cap\{v_1,v_2,v_3\}
=
N(z_{i,j})\cap\{v_1,v_2,v_3\}
=
\{v_i,v_j\}
\quad
\]
}
\textcolor{black}{
By the structure of $H_5$, the vertices $v_i,v_j,z_{i,j}$ belong to the same
connected component $C_{i,j}$ of $G[N(u_{i,j})]$, and hence
$v_i,v_j\in S_{C_{i,j}}$.
By Definition~\ref{def: bip aux}, there exists a vertex
$a_{\tau_{i,j}\mu_{i,j}}\in A^\phi_B$
adjacent to both $v_i$ and $v_j$ in $G^\phi_B$. Let $\{v_1,v_2,v_3\}\setminus \{v_i,v_j\}=\{v_k\}$,  by Lemma~\ref{lem: neighborhood of vertices in N_G}, $a_{\tau_{i,j}\mu_{i,j}}$ is not neighbor of  
$v_k$ in $G^\phi_B$ since $\{\tau_{i,j}, \mu_{i,j}\}\not\subseteq N(v_k)$ because $u_{i,j},z_{i,j)}\notin N(v_k)$ and $\{\tau_{i,j}, \mu_{i,j}\}\cup \{u_{i,j},z_{i,j}\}$ is a clique.
Moreover, the vertices
$a_{\tau_{1,2}\mu_{1,2}},
a_{\tau_{2,3}\mu_{2,3}},
a_{\tau_{3,1}\mu_{3,1}}$
are pairwise distinct, since each of them is adjacent to a different pair
among $\{v_1,v_2,v_3\}$.
Therefore,
}
\textcolor{black}{
\[
v_1,
a_{\tau_{1,2}\mu_{1,2}},
v_2,
a_{\tau_{2,3}\mu_{2,3}},
v_3,
a_{\tau_{3,1}\mu_{3,1}}
\]
induces a $C_6$ in $G_B^\phi$.
}

\textcolor{black}{
Conversely, assume that $G_B^\phi$ contains an induced $C_6$ on
\[
\{a_{r_is_i}:1\le i\le3\}\cup\{v_i:1\le i\le3\},
\]
where $a_{r_is_i}$ is adjacent exactly to $v_i$ and $v_{i+1}$
(indices modulo $3$).
By the construction of $G_B^\phi$, for each $i$ there exists a vertex
$u_{i,i+1}\in V(G)\setminus N_G$ and a non-special component
$C_{i,i+1}$ of $G[N(u_{i,i+1})]$ such that
$\{v_i,v_{i+1}\}\subseteq S_{C_{i,i+1}}$.
Moreover, since the $C_6$ is induced, the vertex $a_{r_is_i}$ is adjacent to
exactly two vertices of $\{v_1,v_2,v_3\}$.
Hence, for the third vertex $v_k$, either
$v_k$ is not adjacent to $u_{i,i+1}$, or
$v_k$ is adjacent to $u_{i,i+1}$ but $v_k\notin C_{i,i+1}$.
In both cases, $v_k\notin S_{C_{i,i+1}}$.
Using that $N_G$ is an independent set and
Lemma~\ref{lem: neighborhood of vertices in N_G},
these vertices form a partitioned subgraph isomorphic to $H_5$.
}
\end{proof}

	%
%
\textcolor{black}{We are now ready} to prove the main theorem of this section, which underpins our recognition algorithm.
\begin{theorem}\label{thm: new characterization}
\textcolor{black}{A graph $G$} is probe diamond-free if and only if it is a \textcolor{black}{\emph{LUCS}} $\overline{P_3+K_2}$-free graph such that $N_G$ is an independent set and $G_B$ is $C_6$-free.
\end{theorem}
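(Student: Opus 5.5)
The plan is to prove both directions using Theorem~\ref{thm: charaterization partitioned} applied to the partition $(V(G)\setminus N_G, N_G)$, together with the lemmas of this section.

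\emph{Necessity.} Assume $G$ is probe diamond-free.
\begin{itemize}
\item Lemma~\ref{lem: probe diamond-free implies locally complete split} gives that $G$ is LUCS.
\item Theorem~\ref{thm: charaterization non-partitioned} gives that $G$ is $\overline{P_3+K_2}$-free.
\item For the independence of $N_G$: by Theorem~\ref{thm: charaterization non-partitioned}, taking $N$ to be the set of tips of induced diamonds yields a valid probe partition, so this set is independent. By Remark~\ref{rmk: tip of a dimond} this set equals $N_G$.
\item Suppose $G_B$ contained an induced $C_6$. Lemma~\ref{lem: C_6} applies, since its hypotheses are the three properties just established. It yields $H_5$ as a partitioned subgraph of $(V(G)\setminus N_G, N_G)$.
\item This contradicts Theorem~\ref{thm: charaterization partitioned}, because that partition is a valid probe partition. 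One should check the intended notion is "partitioned induced subgraph", which is how Lemma~\ref{lem: C_6} is meant.
\end{itemize}

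\emph{Sufficiency.} Assume $G$ is LUCS, $\overline{P_3+K_2}$-free, $N_G$ is independent and $G_B$ is $C_6$-free. We take $P=V(G)\setminus N_G$ and $N=N_G$, which is a legitimate probe partition since $N_G$ is independent. By Theorem~\ref{thm: charaterization partitioned}, it suffices to show the partitioned graph contains none of the five partitioned graphs of Figure~\ref{fig: forbidden partitioned subgraphs}.

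We go through them one by one, guided by their description in the figure caption and in~\cite{BonomoFDDGSS}.
\begin{itemize}
\item Any configuration that forces a diamond one of whose tips is a probe vertex is impossible. Indeed, every tip of an induced diamond lies in $N_G$ by Remark~\ref{rmk: tip of a dimond}.
\item Configurations whose underlying graph is one of the LUCS-forbidden graphs (gem, $\overline{P_3+2K_1}$, $W_4$) are excluded by Lemma~\ref{lem: complete split}.
\item $H_4$ is the partitioned version of $S_4=\overline{P_3+K_2}$, so it is excluded directly.
\item $H_5$ is excluded by Lemma~\ref{lem: C_6} together with $C_6$-freeness of $G_B$.
\item The remaining partitioned graphs should be ones in which two nonprobe vertices are adjacent, or a nonprobe vertex is a degree-three vertex of a diamond with nonprobe tips. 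These are excluded by the independence of $N_G$ and by Lemma~\ref{lem: neighborhood of vertices in N_G}, which says that $G[N(v)]$ is $P_3$-free for $v\in N_G$.
\end{itemize}

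Once all five are excluded, $G$ is partitioned probe diamond-free with respect to $(P,N)$, hence probe diamond-free. Remark~\ref{rmk: tip of a dimond} then identifies the completion $F_G$.

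\emph{Main obstacle.} The hard part is the case analysis in sufficiency. We do not have the explicit list of the partitioned forbidden graphs here, so each must be matched to one of the hypotheses: tips are nonprobes, LUCS, $\overline{P_3+K_2}$-freeness, independence of $N_G$, the $P_3$-freeness of neighborhoods of $N_G$-vertices, or $C_6$-freeness. The first thing to try is to read off from~\cite{BonomoFDDGSS} which partitioned obstructions $H_1,\dots,H_3$ are. One expects them to be small: a diamond with a probe tip, $\overline{P_3+2K_1}$-type graphs, and a $P_3$ inside a nonprobe's neighbourhood. Each would then be dispatched by the items above. A secondary subtlety is that Lemma~\ref{lem: C_6} speaks of "partitioned subgraph" rather than "partitioned induced subgraph". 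I would argue that, under independence of $N_G$ and Lemma~\ref{lem: neighborhood of vertices in N_G}, the copy found is in fact induced.
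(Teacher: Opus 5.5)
Your proposal is correct and follows the paper's proof. Necessity uses Lemma~\ref{lem: probe diamond-free implies locally complete split}, Theorem~\ref{thm: charaterization non-partitioned} and Lemma~\ref{lem: C_6}; you get independence of $N_G$ from the ``moreover'' clause of Theorem~\ref{thm: charaterization non-partitioned} plus Remark~\ref{rmk: tip of a dimond} rather than from Lemma~\ref{lem: N_G stable set two}, which is equally valid. Sufficiency uses Theorem~\ref{thm: charaterization partitioned} and Lemma~\ref{lem: C_6} on the partition $(V(G)\setminus N_G,N_G)$, and you spell out more of the case check than the paper does.

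The point you leave open is settled by your own first bullet. With $N$ independent, a diamond with a probe tip has exactly three colourings, each a minimal four-vertex partitioned obstruction, and they are pairwise incomparable. Together with $H_4$ and $H_5$ they are therefore exactly the five graphs of Figure~\ref{fig: forbidden partitioned subgraphs}. Your last bullet is vacuous, since $N$ is independent in any partitioned graph.
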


\begin{proof}
The ``only if'' \textcolor{black}{direction} follows from the fact that probe diamond-free graphs are $\overline{P_3+2K_1}$-free \textcolor{black}{and from} Lemmas~\ref{lem: probe diamond-free implies locally complete split}, \ref{lem: N_G stable set two}, and~\ref{lem: C_6}.

The ``if'' \textcolor{black}{direction} follows from Theorem~\ref{thm: charaterization partitioned} by using Lemma~\ref{lem: C_6} for the probe partition $(V(G)\setminus N_G,\, N_G)$.
\end{proof}
\section{$O(nm)$-recognition algorithm for probe diamond-free graphs}\label{sec: algorithm}
%
In this section we present an \(O(nm)\)-time recognition algorithm. 
%
		%
%
The subroutine $\textsc{BFS}(G,z,x,y)$ performs a breadth-first search on the
connected graph $G$ starting from $z$ and stops as soon as either $x$ or $y$
is reached, where $xy$ is an edge of $G$ and $x,y$ are non-neighbors of $z$.
Let $a\in\{x,y\}$ be the first vertex reached by the search,
let $\{b\}=\{x,y\}\setminus\{a\}$,
let $c$ be the predecessor of $a$ in the BFS tree,
and let $d$ be the predecessor of $c$.

The procedure returns two values:
(i) a number in $\{1,3\}$, and
(ii) a sequence of four vertices.
More precisely, it returns
\[
(1,\,[c,a,d,b]) \quad \textit{if } cb\notin E(G),
\]
and
\[
(3,\,[c,a,b,d]) \quad \textit{otherwise}.
\]
In the first case, the induced subgraph on $\{a,b,c,d\}$ is a $P_4$,
while in the second case it is a paw.
The sequence is sorted in non-increasing order of the vertex degrees in the
induced subgraph, and ties are broken lexicographically according to their
neighborhoods. We refer to this ordering as the \emph{degree--lexicographic
ordering}.

This subroutine is needed to implement the following $O(n+m)$-time algorithm ($\textsc{non\_complete\_split}$),
which, together with the $\textsc{CSDA}$ algorithm mentioned in
Remark~\ref{rem:complete_split_algoritm},
constitute intermediate steps toward the design of our final recognition
algorithm with a negative certificate.

\begin{algorithm}[H]
	\caption{\textsc{non\_complete\_split}}\label{algo: non local complete split}
	\begin{algorithmic}[1]
		\Require A connected non-complete split graph $G$ with a universal vertex $x$ such that $G-x$ is connected.
\Ensure An indicator number $\textsc{Ind}$ and a sequence $Q$ of five vertices of $V(G)$.
The vertices in $Q$ induce either  
(1) a $\overline{P_4 + K_1}$,  
(2) a $\overline{2K_2 + K_1}$, or  
(3) a $\overline{P_3 + 2K_1}$.  
The value of $\textsc{Ind}$ specifies which of these three cases occurs.
The sequence $Q$ is ordered according to the
\emph{degree--lexicographic ordering}.
		\State $Y\gets\emptyset $
		\For{$v\in V(G)$}
		\If{$d_G(v) < |V(G)| - 1$}
		\State add $v$ to $Y$
		\EndIf
		\EndFor
		\State Choose $ab\in E(G[Y])$
		\State Choose $a'\in Y\setminus N(a)$
		\If{$a'\notin N(b)$}
		\State $(\textsc{Ind},Q)\gets \textsc{BFS}(G-x,\, a',\, a,\, b)$
		\Else 
		\State choose $b'\in Y\setminus N(b)$
		\If{$b'\notin N(a)$}
		\State $(\textsc{Ind},Q)\gets \textsc{BFS}(G-x,\, b',\, a,\, b)$
		\Else
		\State $Q\gets [a,b,b',a']$
		\If{$a'\notin N(b')$}
		\State $\textsc{Ind}\gets 1$
		\Else
		\State $\textsc{Ind}\gets 2$
		\EndIf
		\EndIf		
		\EndIf
		\State \Return ($\textsc{Ind}$,$[x]\mathbin{\|}Q$)		
	\end{algorithmic}
\end{algorithm}
Note that the existence of the edge $ab\in E(G[Y])$ in step $7.$ is a consequence of that $G$ is not a complete split graph. Additionally, in step $8.$ ($12.$), such a vertex $a'$ ($b'$) does exist because otherwise $a\notin Y$ ($b\notin Y$).\\

The following $O(n + m)$-time algorithm ($\textsc{auxiliary\_bipartite\_graph}$) constructs the auxiliary bipartite graph described in Section~\ref{sec: characterization} (we take $\phi$ to be the identity ordering), with $O(n + m)$ vertices, and $O(m)$ edges. This graph is used in Theorem~\ref{thm: new characterization} to establish a new characterization of probe diamond-free graphs, providing an alternative to the characterization given in~\cite{BonomoFDDGSS}.
%
\begin{algorithm}[H]
	\caption{\textsc{auxiliary\_bipartite\_graph}}\label{algo: auxiliary_bipartite_graph}
	\begin{algorithmic}[1]
		\Require A locally complete split graph $G$ with $n$-vertex set $\{0,\ldots, n - 1\}$
		\Ensure The auxiliary bipartite graph $G_B$, and a map $\mathrm{rep}$ $A$.
		\State $a\gets n$
		\State $\mathrm{rep}\gets []$
		\State $V(G_B)\gets \emptyset$
		\State $I\gets \emptyset$
		\For{$0\le i\le n - 1$}
		\For{each connected component $C$ of $G[N(i)]$}
			\State $(\textsc{Answer},(K,S))\gets \textsc{CSDA}(C)$
			\State $j\gets\min_{v\in K} v$
			\If{$|S|\ge 2$ and $j> i$}
				\State $V(G_B)\gets V(G_B)\cup \{a\}\cup S$
				\State $I\gets I\cup S$
				\State add $as$ to $E(G_B)$ for each $s\in S$
				\State $\mathrm{rep}.\textsc{add}(a,(i,\, j))$
				\State $a\gets a + 1$
			\EndIf
		\EndFor
		\EndFor
		\State \Return ($G_B$, $A$)
		\end{algorithmic}
\end{algorithm}
%
Recall that $H_4$ consists of an induced diamond $D$ plus one additional vertex
$r$ of degree $2$ adjacent exactly to the tips of $D$.
In $H_4$, the only nonprobe vertices are precisely the two tips of the diamond,
that is, the endpoints of the missing edge of $D$.

Accordingly, our detection routine outputs a certificate ordered as
\[
Q=[x,u,z,y,r],
\]
where $x$ and $y$ are the tips of the induced diamond (hence $x,y\in N_G$),
$u$ and $z$ are the two degree-$3$ vertices of the diamond (true twins in the
diamond), and $r$ is the extra vertex adjacent exactly to $x$ and $y$.
Thus positions $2$ and $3$ are the true twins, positions $1$ and $4$ are the
false twins, and the last position is the unique degree-$2$ vertex of $H_4$.
This ordering is compatible with our degree--lexicographic convention: after
fixing a tip (position $1$), the remaining roles in $H_4$ are determined
uniquely up to symmetry.

\begin{algorithm}[H]
\caption{\textsc{detect\_H4}}\label{algo:detect_H4}
\begin{algorithmic}[1]
\Require A LUCS graph $G$, the set $N_G$ (independent), and the auxiliary graph
$G_B=(A_B\cup N_G,E_B)$ together with a map $\mathrm{rep}$ such that
for each $a\in A_B$, $\mathrm{rep}(a)=(u,z)$ and $a=a_{uz}$.
\Ensure \textbf{yes} and a sequence $Q=[x,u,z,y,r]$ inducing $H_4$, where $S_4$ is its unpartitioned version and $Q$ follows the \emph{degree--lexicographic ordering} shown in Figure~\ref{fig: forbidden  subgraphs 2}; or \textbf{no}.

\State Initialize arrays $\mathrm{cnt}[a]\gets 0$, $\mathrm{wit}_1[a]\gets \bot$, $\mathrm{wit}_2[a]\gets \bot$ for all $a\in A_B$.
\State Initialize an empty list \textsc{Touched}.

\For{each vertex $r\in V(G)\setminus N_G$}
    \State \textsc{Touched}$\gets\emptyset$
    \For{each $x\in N(r)\cap N_G$}
        \For{each $a\in N_{G_B}(x)$}
            \If{$\mathrm{cnt}[a]=0$}
                \State $\mathrm{cnt}[a]\gets 1$;\ $\mathrm{wit}_1[a]\gets x$;\ add $a$ to \textsc{Touched}
            \ElsIf{$\mathrm{cnt}[a]=1$}
                \State $\mathrm{cnt}[a]\gets 2$;\ $\mathrm{wit}_2[a]\gets x$
            \EndIf
        \EndFor
    \EndFor

    \For{each $a\in$ \textsc{Touched}}
        \If{$\mathrm{cnt}[a]=2$}
            \State $x\gets \mathrm{wit}_1[a]$;\ $y\gets \mathrm{wit}_2[a]$
            \State $(u,z)\gets \mathrm{rep}(a)$
            \If{$ru\notin E(G)$ \textbf{and} $rz\notin E(G)$}
                \State \Return (\textbf{yes}, $[x,u,z,y,r]$)
            \EndIf
        \EndIf
    \EndFor

    \For{each $a\in$ \textsc{Touched}}
        \State $\mathrm{cnt}[a]\gets 0$;\ $\mathrm{wit}_1[a]\gets \bot$;\ $\mathrm{wit}_2[a]\gets \bot$
    \EndFor
\EndFor
\State \Return (\textbf{no}, $\emptyset$).
\end{algorithmic}
\end{algorithm}

\begin{remark}\label{rem:detect_H4_time}
Since $|E(G_B)|=O(m)$ and $N_G\subseteq V(G)$, Algorithm~\ref{algo:detect_H4}
runs in $O(nm)$ time in the worst case.

Indeed, for each vertex $r$ considered by the outer loop, the algorithm scans
the set $N(r)\cap N_G$ and, for each $x\in N(r)\cap N_G$, it traverses the
adjacency list $N_{G_B}(x)$ in $G_B$.
Hence the total running time is
\[
O\!\left(
\sum_{r\in V(G)}
\sum_{x\in N(r)\cap N_G}
|N_{G_B}(x)|
\right)
=
O\!\left(
\sum_{x\in N_G} d_G(x)\, d_{G_B}(x)
\right).
\]
As $d_G(x)\le n-1$ for every $x$, we obtain
\[
\sum_{x\in N_G} d_G(x)\, d_{G_B}(x)
\le
(n-1)\sum_{x\in N_G} d_{G_B}(x)
\]

\[
\le
(n-1)\sum_{x\in V(G_B)} d_{G_B}(x)
=
(n-1)\cdot 2|E(G_B)|
=
O(nm).
\]
All remaining operations (maintaining counters, resetting touched vertices, and
the constant-time adjacency checks in $G$) are dominated by this bound.
\end{remark}

\medskip
\noindent\textbf{Correctness of Algorithm~\ref{algo:detect_H4}}

\begin{proof}
We show that Algorithm~\ref{algo:detect_H4} is correct by proving that it produces
no false positives and no false negatives.

\smallskip
\noindent
\textbf{Soundness (no false positives).}
Assume that the algorithm returns \textbf{yes} together with a sequence
\[
Q=[x,u,z,y,r].
\]
By construction, the algorithm finds a vertex $a\in A_B$ such that
$x,y\in N_{G_B}(a)$, where $\mathrm{rep}(a)=(u,z)$, and a vertex
$r\in V(G)\setminus N_G$ satisfying $x,y\in N_G(r)$ and
$ru,rz\notin E(G)$.

Since $x,y\in N_{G_B}(a)$, by Definition~\ref{def: bip aux} they belong to the
independent side $S_C$ of the same non-special component $C$ of $G[N(u)]$ that
contains $z$. Hence $x$ and $y$ are nonadjacent and both adjacent to $u$ and $z$.
It follows that $\{x,y,u,z\}$ induces a diamond whose tips are exactly $x$ and $y$.

Moreover, by construction, $r$ is adjacent to $x$ and $y$ and nonadjacent to
$u$ and $z$. Therefore, the induced subgraph on
$\{x,u,z,y,r\}$ is precisely a copy of $H_4$, with $x,y$ as the nonprobe tips of
the diamond and $r$ as the unique vertex of degree~$2$.
Thus, whenever the algorithm outputs \textbf{yes}, a valid induced $H_4$ is
indeed present in $G$.

\smallskip
\noindent
\textbf{Completeness (no false negatives).}
Assume now that $G$ contains an induced subgraph isomorphic to $H_4$.
Let $x,y$ be the tips of the induced diamond $D$, let $u',z'$ be the two
degree-$3$ vertices of $D$, and let $r$ be the additional vertex adjacent exactly
to $x$ and $y$. In particular, $x,y\in N_G$, $r\in V(G)\setminus N_G$, and
$ru',rz'\notin E(G)$.

Since $x$ and $y$ are tips of an induced diamond, Remark~\ref{rem: pi} implies
that $\mathrm{dist}_{G_B}(x,y)=2$, and hence there exists a vertex
$a\in A_B$ adjacent to both $x$ and $y$.
Let $\mathrm{rep}(a)=(u,z)$.

The algorithm considers the vertex $r$ in the outer loop.
Because $x,y\in N(r)\cap N_G$, during the inner loops it will traverse
the adjacency list of $x$ in $G_B$ and, in particular, it will touch the vertex
$a$; similarly, when processing $y$ it will touch $a$ again.
Therefore, at the end of these loops we have $\mathrm{cnt}[a]=2$, and the stored
witnesses $\mathrm{wit}_1[a],\mathrm{wit}_2[a]$ are two (not necessarily unique)
vertices
\[
x',y'\in N(r)\cap N_G \cap N_{G_B}(a),
\qquad x'\neq y'.
\]
In particular, the algorithm will test the pair $(a,r)$ using
$x'=\mathrm{wit}_1[a]$ and $y'=\mathrm{wit}_2[a]$.

If $ru\notin E(G)$ and $rz\notin E(G)$, then the algorithm outputs
$[x',u,z,y',r]$. As in the soundness part, $x'$ and $y'$ belong to the
independent side of the same non-special component of $G[N(u)]$ containing $z$,
so $\{x',y',u,z\}$ induces a diamond with tips $x'$ and $y'$.
Together with $r$, which is adjacent to $x'$ and $y'$ and nonadjacent to $u$ and
$z$, this yields an induced copy of $H_4$, and the algorithm returns \textbf{yes}.

Suppose, for contradiction, that one of the following cases hold. 


\begin{itemize}
\item[(a)] $r=u$.  
Then $z$ is adjacent to $r$. Since $\mathrm{rep}(a)=(u,z)$ and $a$ encodes the
clique $K_C\cup\{u\}$ of the corresponding non-special component, the two
degree-$3$ vertices $u',z'$ of the original diamond lie in $K_C\cup\{u\}$.
Hence at least one of $u'$ or $z'$ is adjacent to $r$, contradicting $ru',rz'\notin E(G)$.

\item[(b)] $r=z$.  
This case is symmetric to~(a).

\item[(c)] Both $ru\in E(G)$ and $rz\in E(G)$.  
Then $r$ belongs to the connected component $C$ of $G[N(u)]$ that contains $z$.
Since $u',z'$ belong to the clique side of $C$ (and are universal in $C$),
they must be adjacent to $r$, again contradicting that $r$ is adjacent only to
$x$ and $y$ in $H_4$.
\end{itemize}

All cases lead to a contradiction. Therefore $ru\notin E(G)$ and $rz\notin E(G)$
must hold, and the algorithm detects a copy of $H_4$ (possibly using witnesses
$x',y'$ different from $x,y$) and returns \textbf{yes}.

\smallskip
\noindent
Since the algorithm is both sound and complete, it correctly decides whether
$G$ contains an induced subgraph isomorphic to $H_4$.
\end{proof}

We emphasize that Algorithm~\ref{algo: forbidden-locaclly-split-complete-free}
takes an arbitrary input graph $G$ and first verifies whether it is
\emph{locally union of complete split} (LUCS).
If the LUCS verification fails, the algorithm immediately outputs a forbidden
induced subgraph for the LUCS property (namely $\overline{P_4+K_1}$,
$\overline{2K_2+K_1}$, or $\overline{P_3+2K_1}$), which is also forbidden for
probe diamond-free graphs.

Assume now that the LUCS test succeeds.
In this case, the algorithm attempts to construct candidate sets $N$ and $P$
for the nonprobe and probe vertices, respectively, together with a completion
set $F$, as specified in the \textsc{Output} clause.
The construction is driven by local constraints imposed by the complete split
structure of the connected neighborhoods.

Up to this point in the paper, the development focused primarily on identifying
the set of vertices that must belong to $N$, and on verifying whether this set
forms an independent set, with the probe set implicitly defined as
$P = V(G)\setminus N$.
However, from an algorithmic standpoint, violations of these local constraints
may arise for different structural reasons, and each of them must be identified
explicitly in order to produce a correct negative certificate.

In particular, local complete split constraints may force certain vertices to
act as probes, while others are forced to act as nonprobes.
It may therefore happen that a vertex is forced, by different local
considerations, to play incompatible roles.
Such a situation makes the partition invalid and certifies that $G$ cannot
admit a probe diamond-free representation consistent with these constraints.

Since the algorithm is designed to return explicit certificates, it is
necessary not only to detect that the construction fails, but also to identify
the precise structural reason for the failure.
In this setting, every such role-conflict is witnessed by the presence of a
small forbidden induced subgraph.
More precisely, whenever a vertex is forced to play incompatible roles, the
graph contains an induced copy of either $S_1$ or $S_4$. Indeed, it is easy
to prove that if the graph is LUCS and $\{S_1,S_4\}$-free, then no vertex can
be simultaneously the tip of one diamond and a nontip of another distinct
diamond.  
Accordingly, when such a conflict is detected, the algorithm outputs a copy of
$S_1$ or $S_4$ as a negative certificate.

Even when no role-conflict arises during the local construction, the set $N$
obtained in this phase is not guaranteed to be an independent set.
Since nonprobe vertices must form an independent set, the algorithm performs an
additional validation step.
This step is implemented by a separate routine that checks whether $G[N]$ is
edgeless.

If $N$ is independent, then the algorithm returns a positive answer with
$P=V(G)\setminus N$ and the corresponding completion set $F$. Otherwise, since the graph is $S_1$-free, the existence of an edge inside
$N$ yields an induced copy of $S_2$, an induced copy of $S_4$
(see Lemma~\ref{lem: N_G stable set two} and notice that
$\overline{P_3+K_2}=S_4$), or an induced copy of one of the forbidden graphs
$T_i$ depicted in Figure~\ref{fig: forbidden  subgraphs}.

Figure~\ref{fig: Tis with t and rho} illustrates the graphs $T_i$ together with
the corresponding $t$-values of vertices and $\rho$-values of the two diamonds,
which are used by Algorithm~\ref{algo:check_N_independent} to identify the exact
obstruction.

In all negative cases, the returned induced subgraph is forbidden for probe
diamond-free graphs and serves as a certificate.

\begin{figure}
\centering
\begin{tabular}{ccccc}
\TOnePlus  & \TTwoPlus  & \TThreePlus  & \TFourPlus & \TFivePlus \\
$T_1$  & $T_2$  & $T_3$ & $T_4$  & $T_5$\\
\\[-2mm]
\TSixPlus & \TSevenPlus & \TEightPlus & \TNinePlus & \TTenPlus \\
$T_6$  & $T_7$  & $T_8$  & $T_9$  & $T_{10}$

\end{tabular}
	\caption{For each graph $T_i$, vertices with nonzero $t$-value are shown in \textcolor{blue}{blue}, and the $\rho$-values of its two diamonds are shown in \textcolor{brown}{brown}.}
	\label{fig: Tis with t and rho}
\end{figure}
\begin{algorithm}[H]
\caption{\textsc{recognizing\_LUCS\_and\_valid\_partition}}
\label{algo: forbidden-locaclly-split-complete-free}
\begin{algorithmic}[1]
\Require A graph $G$.
\Ensure \textbf{yes/no}, an indicator number $\textsc{Ind}$, a sequence $Q$,
and sets $P$, $N$, and $F$.

If the first return value is \textbf{yes}, then $P$, $N$, and $F$ are candidate
sets for the probe set, the nonprobe set, and the completion set, respectively.
Otherwise, $Q\subseteq V(G)$ induces one of the following forbidden induced
subgraphs:
\begin{enumerate}
\item[$\textsc{Ind}=1$:] a $\overline{P_4 + K_1}$;\quad
$\textsc{Ind}=2$: a $\overline{2K_2 + K_1}$;\quad
$\textsc{Ind}=3$: a $\overline{P_3 + 2K_1}$;
\item[$\textsc{Ind}=4$:] a copy of $S_1$;\quad
$\textsc{Ind}=5$: a copy of $S_4$;\quad
$\textsc{Ind}=6$: a copy of $S_2$;\quad
$\textsc{Ind}=i+6$: a copy of $T_i$ for some $1\le i\le 10$.
\end{enumerate}
In all cases, the sequence $Q$ is ordered according to the corresponding
\emph{degree--lexicographic ordering} depicted in Figures~\ref{fig: forbidden  subgraphs 3},~\ref{fig: forbidden  subgraphs} or ~\ref{fig: forbidden  subgraphs 2}.
\State Initialize arrays $\mathrm{type}[v]\gets \bot$,
$\mathrm{wit}_1[v]\gets \bot$, $\mathrm{wit}_2[v]\gets \bot$,
$\mathrm{wit}_3[v]\gets \bot$ for all $v\in V(G)$
\State $P'\gets \emptyset$; $N\gets \emptyset$; $v^\star\gets \bot$

\For{$v\in V(G)$}
  \For{each connected component $C$ of $G[N(v)]$}
    \State $(\textsc{Answer},(K,S))\gets \textsc{CSDA}(C)$
    \If{$\textsc{Answer}=\textbf{no}$}
      \State $(\textsc{Ind},Q)\gets \textsc{non\_complete\_split}(G[C\cup\{v\}],v)$
      \State \Return (\textbf{no}, $\textsc{Ind}$, $Q$, $\emptyset$, $\emptyset$, $\emptyset$)
    \Else
      \If{$|S|>1$ \textbf{and} $v^\star=\bot$}
        \For{$w\in S$}
          \If{$\mathrm{type}[w]=\bot$}
            \State add $w$ to $N$
            \State $\mathrm{type}[w]\gets \textbf{tip}$
            \State $\mathrm{wit}_1[w]\gets$ any vertex $w'\in S\setminus\{w\}$
            \State $\mathrm{wit}_2[w]\gets$ any vertex $u\in K$
            \State $\mathrm{wit}_3[w]\gets$ any vertex $z\in K\setminus\{\mathrm{wit}_2[w]\}$
          \EndIf
          \If{$\mathrm{type}[w]=\textbf{notip}$}
            \State $v^\star\gets w$
            \State $\mathrm{WIT}_1\gets$ any vertex $w'\in S\setminus\{w\}$
            \State $\mathrm{WIT}_2\gets$ any vertex $u\in K$
            \State $\mathrm{WIT}_3\gets$ any vertex $z\in K\setminus\{\mathrm{WIT}_2\}$
            \State \textsc{break}
          \EndIf
        \EndFor
				\State \textbf{if} $v^\star\ne\bot$ \textbf{then} \textsc{continue}				
        \algstore{myalg}
\end{algorithmic}
\end{algorithm}

\begin{algorithm}[H]
\ContinuedFloat
\caption{\textsc{recognizing\_LUCS\_and\_valid\_partition} (continued)}
\begin{algorithmic}[1]
\algrestore{myalg}
        \For{$w\in K$}
          \If{$\mathrm{type}[w]=\bot$}
            \State add $w$ to $P'$; $\mathrm{type}[w]\gets \textbf{notip}$
            \State $\mathrm{wit}_1[w]\gets v$; $\mathrm{wit}_2[w]\gets$ any vertex $u\in S$
            \State $\mathrm{wit}_3[w]\gets$ any vertex $z\in S\setminus\{\mathrm{wit}_2[w]\}$
          \EndIf
          \If{$\mathrm{type}[w]=\textbf{tip}$}
            \State $v^\star\gets w$; $\mathrm{WIT}_1\gets v$
            \State $\mathrm{WIT}_2\gets$ any vertex $u\in S$
            \State $\mathrm{WIT}_3\gets$ any vertex $z\in S\setminus\{\mathrm{WIT}_2\}$; \textsc{break}
          \EndIf
        \EndFor
        \State \textbf{if} $v^\star=\bot$ \textbf{then} $F\gets\{uv:\, u,v\in S\}$
      \EndIf
    \EndIf
  \EndFor
\EndFor


\If{$v^\star\ne\bot$}
  \If{$\mathrm{type}[v^\star]=\textbf{tip}$}
    \If{$N(\mathrm{wit}_1[v^\star])\cap \{\mathrm{WIT}_1,\mathrm{WIT}_2,\mathrm{WIT}_3\}\ne \emptyset$}
      \State choose $x \in N(\mathrm{wit}_1[v^\star])\cap \{\mathrm{WIT}_1,\mathrm{WIT}_2,\mathrm{WIT}_3\}$
      \State \Return (\textbf{no}, $5$, $[v^\star,\mathrm{wit}_2[v^\star],\mathrm{wit}_3[v^\star],x,\mathrm{wit}_1[v^\star]]$, $\emptyset$, $\emptyset$, $\emptyset$)
    \Else
      \State \Return (\textbf{no}, $4$, $[v^\star,\mathrm{wit}_2[v^\star],\mathrm{wit}_3[v^\star],\mathrm{WIT}_1,\mathrm{WIT}_2,\mathrm{WIT}_3,\mathrm{wit}_1[v^\star]]$, $\emptyset$, $\emptyset$, $\emptyset$)
    \EndIf
  \Else
    \If{$N(\mathrm{WIT}_1)\cap \{\mathrm{wit}_1[v^\star],\mathrm{wit}_2[v^\star],\mathrm{wit}_3[v^\star]\}\ne \emptyset$}
      \State choose $x \in N(\mathrm{WIT}_1)\cap \{\mathrm{wit}_1[v^\star],\mathrm{wit}_2[v^\star],\mathrm{wit}_3[v^\star]\}$
      \State \Return (\textbf{no}, $5$, $[v^\star,\mathrm{WIT}_2,\mathrm{WIT}_3,x,\mathrm{WIT}_1]$, $\emptyset$, $\emptyset$, $\emptyset$)
    \Else
      \State \Return (\textbf{no}, $4$, $[v^\star,\mathrm{WIT}_2,\mathrm{WIT}_3,\mathrm{wit}_1[v^\star],\mathrm{wit}_2[v^\star],\mathrm{wit}_3[v^\star],\mathrm{WIT}_1]$, $\emptyset$, $\emptyset$, $\emptyset$)
    \EndIf
  \EndIf
\EndIf
\Comment{At this point, $v^\star=\bot$ and the LUCS verification succeeded.
We now validate that $N$ is an independent set.}
\State $(\textsc{ok},\textsc{Ind},Q)\gets \textsc{check\_N\_independent}(G,N,\mathrm{type},\mathrm{wit}_1,\mathrm{wit}_2,\mathrm{wit}_3)$
\State \textbf{if} $\textsc{ok}=\textbf{no}$ \textbf{then} \Return (\textbf{no}, $\textsc{Ind}$, $Q$, $\emptyset$, $\emptyset$, $\emptyset$)
\State \Return (\textbf{yes}, $\bot$, $[]$, $V(G)\setminus N$, $N$, $F$)
\end{algorithmic}
\end{algorithm}

\begin{algorithm}[H]
\caption{\textsc{check\_N\_independent}}\label{algo:check_N_independent}
\begin{algorithmic}[1]
\Require A LUCS graph $G$, a set $N\subseteq V(G)$, and arrays
$\mathrm{type}[\cdot]$, $\mathrm{wit}_1[\cdot]$, $\mathrm{wit}_2[\cdot]$, $\mathrm{wit}_3[\cdot]$
as computed by Algorithm~\ref{algo: forbidden-locaclly-split-complete-free}.
\Ensure \textbf{yes} if $N$ is an independent set; otherwise \textbf{no}, an indicator
$\textsc{Ind}\in\{5,6,7,\dots,16\}$ and a sequence $Q$ inducing $S_4$ ($\textsc{Ind}=5$),
$S_2$ ($\textsc{Ind}=6$), or $T_i$ ($\textsc{Ind}=i+6$).

\For{each edge $vw\in E(G)$}
	\State \textbf{if} $\mathrm{type}[v]=\mathrm{type}[w]=\textbf{tip}$ \textbf{then} \Return \textsc{BuildCertificate}$(v,w)$
\EndFor
\State \Return (\textbf{yes}, $\bot$, $[]$)

\Statex

\Function{\textsc{BuildCertificate}}{$v,w$}
  \If{$\mathrm{wit}_1[v]=\mathrm{wit}_1[w]$}
    \State $Q\gets [\mathrm{wit}_1[v],\mathrm{wit}_2[v],\mathrm{wit}_3[v],\mathrm{wit}_2[w],\mathrm{wit}_3[w],v,w]$
    \State \Return (\textbf{no}, $6$, $Q$) \Comment{$S_2$}
  \EndIf

  \State $D_1\gets\{v,\mathrm{wit}_1[v],\mathrm{wit}_2[v],\mathrm{wit}_3[v]\}$
  \State $D_2\gets\{w,\mathrm{wit}_1[w],\mathrm{wit}_2[w],\mathrm{wit}_3[w]\}$

  \For{each $x\in D_1\cup D_2$}
    \State $t(x)\gets 0$
    \State $D\gets D_2$; \textbf{if} $x\in D_2$ \textbf{then} $D\gets D_1$
    \For{each $y\in D$}
      \If{$xy\in E(G)$}
        \If{$\mathrm{type}[x]=\textbf{tip}$}
          \If{$\mathrm{type}[y]=\textbf{tip}$} \State $t(x)\gets t(x)+10$
          \Else \State $t(x)\gets t(x)+7$ \EndIf
        \Else
          \If{$\mathrm{type}[y]=\textbf{tip}$} \State $t(x)\gets t(x)+6$
          \Else \State $t(x)\gets t(x)+2$ \EndIf
        \EndIf
      \EndIf
    \EndFor
  \EndFor
  \State Choose $x^\star\in D_1\cup D_2$ maximizing $t(x)$.

  \If{$t(x^\star)=14$}
    \State $Q\gets [z,\mathrm{wit}_2[v],\mathrm{wit}_3[v],\mathrm{wit}_2[w],\mathrm{wit}_3[w],v,w]$
	  \State \Return (\textbf{no}, $6$, $Q$) \Comment{$S_2$}
  \EndIf
\algstore{myalg3}
\end{algorithmic}
\end{algorithm}

\begin{algorithm}[H]
\ContinuedFloat
\caption{\textsc{BuildCertificate} (continued)}
\begin{algorithmic}[1]
\algrestore{myalg3}

  \If{$t(x^\star)=20$}
    \State $z\gets v$; \textbf{if} $x^\star\in D_1$ \textbf{then} $z\gets w$
    \State $Q\gets [z,\mathrm{wit}_2[z],\mathrm{wit}_3[z],\mathrm{wit}_1[z],x^\star]$
    \State \Return (\textbf{no}, $5$, $Q$) \Comment{$S_4$}
  \EndIf

  \State $\rho(D_1)\gets \sum_{x\in D_1} t(x)$

  \If{$\rho(D_1)=10$}
    \State $i\gets 1$
    \State $Q\gets [\mathrm{wit}_2[v],\mathrm{wit}_3[v],v,w,\mathrm{wit}_2[w],\mathrm{wit}_3[w],\mathrm{wit}_1[v],\mathrm{wit}_1[w]]$
  \ElsIf{$\rho(D_1)=12$}
    \State $i\gets 2$; \textbf{if} $t(\mathrm{wit}_2[v])=0$ \textbf{then} swap $\mathrm{wit}_2[v],\mathrm{wit}_3[v]$
		\State \textbf{if} $t(\mathrm{wit}_2[w])=0$ \textbf{then} swap $\mathrm{wit}_2[w],\mathrm{wit}_3[w]$
    \State $Q\gets [\mathrm{wit}_2[v],\mathrm{wit}_2[w],\mathrm{wit}_3[v],v,w,\mathrm{wit}_3[w],\mathrm{wit}_1[v],\mathrm{wit}_1[w]]$
  \ElsIf{$\rho(D_1)=20$}
    \State $i\gets 3$
    \State $Q\gets [\mathrm{wit}_2[v],\mathrm{wit}_3[v],v,\mathrm{wit}_1[v],w,\mathrm{wit}_1[w],\mathrm{wit}_2[w],\mathrm{wit}_3[w]]$
  \ElsIf{$\rho(D_1)=16$}
    \State $i\gets 4$ \textbf{if} $t(\mathrm{wit}_2[v])=0$ \textbf{then} swap $\mathrm{wit}_2[v],\mathrm{wit}_3[v]$
    \State $Q\gets [\mathrm{wit}_2[v],\mathrm{wit}_3[v],v,\mathrm{wit}_1[w],w,\mathrm{wit}_2[w],\mathrm{wit}_3[w],\mathrm{wit}_1[v]]$
  \ElsIf{$\rho(D_1)=17$}
    \State $i\gets 4$ \textbf{if} $t(\mathrm{wit}_2[w])=0$ \textbf{then} swap $\mathrm{wit}_2[w],\mathrm{wit}_3[w]$
    \State $Q\gets [\mathrm{wit}_2[w],\mathrm{wit}_3[w],w,\mathrm{wit}_1[v],v,\mathrm{wit}_2[v],\mathrm{wit}_3[v],\mathrm{wit}_1[w]]$
  \ElsIf{$\rho(D_1)=14$}
    \State $i\gets 5$
    \State $Q\gets [\mathrm{wit}_2[v],\mathrm{wit}_3[v],\mathrm{wit}_2[w],\mathrm{wit}_3[w],v,w,\mathrm{wit}_1[v],\mathrm{wit}_1[w]]$
  \ElsIf{$\rho(D_1)=18$}
    \State $i\gets 6$; \textbf{if} $t(\mathrm{wit}_2[v])>t(\mathrm{wit}_3[v])$ \textbf{then} swap $\mathrm{wit}_2[v],\mathrm{wit}_3[v]$
		\State \textbf{if} $t(\mathrm{wit}_2[w])=0$ \textbf{then} swap $\mathrm{wit}_2[w],\mathrm{wit}_3[w]$
    \State $Q\gets [\mathrm{wit}_2[v],\mathrm{wit}_3[v],\mathrm{wit}_2[w],v,\mathrm{wit}_1[w],w,\mathrm{wit}_3[w],\mathrm{wit}_1[v]]$
  \ElsIf{$\rho(D_1)=19$}
    \State $i\gets 6$
		\State \textbf{if} $t(\mathrm{wit}_2[v])=0$ \textbf{then} swap $\mathrm{wit}_2[v],\mathrm{wit}_3[v]$
		\State \textbf{if} $t(\mathrm{wit}_2[w])>t(\mathrm{wit}_3[w])$ \textbf{then} swap $\mathrm{wit}_2[w],\mathrm{wit}_3[w]$
    \State $Q\gets [\mathrm{wit}_2[w],\mathrm{wit}_3[w],\mathrm{wit}_2[v],w,\mathrm{wit}_1[v],v,\mathrm{wit}_3[v],\mathrm{wit}_1[w]]$
  \ElsIf{$\rho(D_1)=22$}
    \State $i\gets 7$
		\State \textbf{if} $t(\mathrm{wit}_2[v])=0$ \textbf{then} swap $\mathrm{wit}_2[v],\mathrm{wit}_3[v]$
		\State \textbf{if} $t(\mathrm{wit}_2[w])=0$ \textbf{then} swap $\mathrm{wit}_2[w],\mathrm{wit}_3[w]$
    \State $Q\gets [\mathrm{wit}_2[v],\mathrm{wit}_2[w],\mathrm{wit}_3[v],v,\mathrm{wit}_1[v],w,\mathrm{wit}_1[w],\mathrm{wit}_3[w]]$
  \ElsIf{$\rho(D_1)=23$}
    \State $i\gets 8$
		\State \textbf{if} $t(\mathrm{wit}_2[v])=0$ \textbf{then} swap $\mathrm{wit}_2[v],\mathrm{wit}_3[v]$
\algstore{myalg3}
\end{algorithmic}
\end{algorithm}

\begin{algorithm}[H]
\ContinuedFloat
\caption{\textsc{BuildCertificate} (continued number 2)}
\begin{algorithmic}[1]
\algrestore{myalg3}
		\State \textbf{if} $t(\mathrm{wit}_2[w])=0$ \textbf{then} swap $\mathrm{wit}_2[w],\mathrm{wit}_3[w]$
    \State $Q\gets [\mathrm{wit}_2[v],\mathrm{wit}_2[w],\mathrm{wit}_1[v],\mathrm{wit}_1[w],\mathrm{wit}_3[v],v,\mathrm{wit}_3[w],w]$		  \ElsIf{$\rho(D_1)=24$}
    \State $i\gets 9$
    \State $Q\gets [\mathrm{wit}_2[v],\mathrm{wit}_3[v],\mathrm{wit}_2[w],\mathrm{wit}_3[w],v,\mathrm{wit}_1[v],w,\mathrm{wit}_1[w]]$
  \Else \Comment{$\rho(D_1)=25$ under the hypotheses.}
    \State $i\gets 10$
		\State \textbf{if} $t(\mathrm{wit}_2[v])>t(\mathrm{wit}_3[v])$ \textbf{then} swap $\mathrm{wit}_2[v],\mathrm{wit}_3[v]$
		\State \textbf{if} $t(\mathrm{wit}_2[w])>t(\mathrm{wit}_3[w])$ \textbf{then} swap $\mathrm{wit}_2[w],\mathrm{wit}_3[w]$
    \State $Q\gets [\mathrm{wit}_2[v],\mathrm{wit}_3[v],\mathrm{wit}_2[w],\mathrm{wit}_3[w],\mathrm{wit}_1[v],v,\mathrm{wit}_1[w],w]$
  \EndIf

  \State \Return (\textbf{no}, $i+6$, $Q$) \Comment{$T_i$}
\EndFunction
\end{algorithmic}
\end{algorithm}

\medskip
\noindent\textbf{Correctness of the LUCS+partition routine.}
Algorithm~\ref{algo: forbidden-locaclly-split-complete-free} is a subroutine of
the full recognition procedure.
Its purpose is twofold: (i) to verify the LUCS property and, when successful,
to construct candidate sets consistent with the local complete-split structure
of the connected neighborhoods; and (ii) to validate that the constructed
candidate nonprobe set $N$ is independent.
In the negative case, the algorithm must output a \emph{precise} forbidden
induced subgraph that explains the failure of this step.

Accordingly, correctness is proved by separating the two responsibilities:
first we justify the certificates produced while enforcing LUCS and detecting
role-conflicts (yielding $\overline{P_4+K_1}$, $\overline{2K_2+K_1}$,
$\overline{P_3+2K_1}$, $S_1$, or $S_4$), and then we prove that the additional
validation step (Algorithm~\ref{algo:check_N_independent}) either confirms that
$N$ is independent or returns a forbidden induced subgraph
($S_2$, $S_4$, or some $T_i$).

\medskip
\noindent\textbf{Correctness of Algorithm
\ref{algo: forbidden-locaclly-split-complete-free}
(excluding Algorithm~\ref{algo:check_N_independent}).}
\begin{proof}
We prove that whenever Algorithm~\ref{algo: forbidden-locaclly-split-complete-free}
returns \textbf{no} \emph{before} invoking
Algorithm~\ref{algo:check_N_independent}, the returned certificate $Q$ is
correct.

\smallskip
\noindent\textbf{Cases $\textsc{Ind}\in\{1,2,3\}$.}
If the algorithm outputs \textbf{no} with $\textsc{Ind}\in\{1,2,3\}$, then it
does so by calling
$\textsc{non\_complete\_split}(G[C\cup\{v\}],v)$ on some connected component
$C$ of $G[N(v)]$.
By correctness of Algorithm~\ref{algo: non local complete split}, the returned
sequence $Q$ induces, respectively, a copy of
$\overline{P_4+K_1}$, $\overline{2K_2+K_1}$, or $\overline{P_3+2K_1}$.
Hence $Q$ is a correct negative certificate.

\smallskip
\noindent\textbf{Cases $\textsc{Ind}\in\{4,5\}$.}
Assume now that the LUCS test succeeds and that the algorithm detects a
role-conflict at some vertex $v^\star$.
During the scan of connected components of neighborhoods, the algorithm assigns
each vertex $p$ a value
$\mathrm{type}[p]\in\{\textbf{tip},\textbf{notip}\}$ consistent with an induced
diamond, and stores witnesses
$\mathrm{wit}_1[p],\mathrm{wit}_2[p],\mathrm{wit}_3[p]$ that determine such a
diamond.

A conflict is detected when $v^\star$ is encountered in a second induced
diamond where it must play the opposite role.
The witnesses of this conflicting diamond are stored in the global variables
$\mathrm{WIT}_1,\mathrm{WIT}_2,\mathrm{WIT}_3$.
From this point on, the algorithm only verifies LUCS and, since the LUCS test
succeeds, it eventually returns $\textsc{Ind}\in\{4,5\}$.

Let $D$ be the induced diamond determined by
$\{v^\star,\mathrm{wit}_1[v^\star],\mathrm{wit}_2[v^\star],\mathrm{wit}_3[v^\star]\}$
and let $D'$ be the induced diamond determined by
$\{v^\star,\mathrm{WIT}_1,\mathrm{WIT}_2,\mathrm{WIT}_3\}$.
If $\mathrm{type}[v^\star]=\textbf{notip}$, we interchange the names of $D$ and
$D'$ so that $v^\star$ is a tip of $D$ and a non-tip of $D'$.

Write $V(D)=\{v^\star,w,u,z\}$ where $v^\star w\notin E(G)$ is the missing edge,
so $v^\star$ and $w$ are the tips and $u,z$ are the degree-$3$ vertices.
Write $V(D')=\{v^\star,y,x,x'\}$ where $x x'\notin E(G)$ is the missing edge of
$D'$, so $x,x'$ are the tips and $v^\star,y$ are the degree-$3$ vertices.

\smallskip\noindent
\emph{Claim 1:} $D$ and $D'$ cannot overlap beyond $v^\star$, and each non-tip of $D$
is nonadjacent to every vertex in $D'\setminus\{v^\star\}$.

\smallskip\noindent
Suppose otherwise.
Then the set
\[
X=(V(D)\cup V(D'))\setminus\{v^\star,w\}
\]
contains at least three vertices and is connected in $G[N(v^\star)]$.
Let $C$ be the connected component of $G[N(v^\star)]$ containing $X$.
Since $G$ is LUCS, $C$ is complete split and non-special, because $X$ contains
two nonadjacent vertices.

Thus one of the degree-$3$ vertices of $D$, say $u$, belongs to the clique side
$K_C$ and is universal in $C$.
Hence every vertex of $X\setminus\{u\}$ lies in $N(u)$.
Since $v^\star$ and $z$ are adjacent to $u$ and to each other, all vertices of
$(V(D)\cup V(D'))\setminus\{u\}$ lie in the same connected component $C'$ of
$G[N(u)]$.

Again, $C'$ is complete split and non-special because it contains the two
nonadjacent vertices $x$ and $x'$.
Therefore $\{x,x'\}\subseteq S_{C'}$.
But $v^\star$ is adjacent to both $x$ and $x'$, so $v^\star\in K_{C'}$.
On the other hand, $v^\star$ is nonadjacent to $w$ and $w\in V(C')$, forcing
$v^\star\in S_{C'}$, a contradiction.

\smallskip\noindent
Therefore $V(D)\cap V(D')=\{v^\star\}$ and there is no edge from $\{u,z\}$ to
$D'\setminus\{v^\star\}$.

\smallskip\noindent
\emph{Claim 2:} the algorithm outputs a correct induced $S_1$ (when
$\textsc{Ind}=4$) or a correct induced $S_4$ (when $\textsc{Ind}=5$).

\smallskip\noindent
If $w$ has no neighbor in $D'$, then the two diamonds share exactly one vertex
$v^\star$ and have no other cross-edges.
Hence the induced subgraph on $V(D)\cup V(D')$ has exactly the ten edges of the
two diamonds and is isomorphic to $S_1$.
The algorithm returns $\textsc{Ind}=4$ together with the corresponding
degree--lexicographic ordering.

Otherwise, $w$ has a neighbor $x^\ast\in\{x,x',y\}$.
By Claim~1, $x^\ast$ has no neighbor in $\{u,z\}$.
Thus, within $G[V(D)\cup\{x^\ast\}]$, the vertex $x^\ast$ is adjacent exactly to
the two tips $v^\star$ and $w$ of $D$ and to no other vertex of $D$.
Therefore $G[V(D)\cup\{x^\ast\}]$ is isomorphic to $S_4$.
The algorithm returns $\textsc{Ind}=5$ with a correct degree--lexicographic
certificate.

In all cases, the returned sequence $Q$ induces the claimed forbidden subgraph.
\end{proof}

\medskip
\noindent\textbf{Correctness of Algorithm~\ref{algo:check_N_independent}.}
\begin{proof}
Assume that Algorithm~\ref{algo: forbidden-locaclly-split-complete-free}
has verified that $G$ is LUCS and produced a set $N$ such that
$N\cap P'=\emptyset$.
Algorithm~\ref{algo:check_N_independent} scans all edges of $G$.

If no edge has both endpoints in $N$, then $N$ is an independent set and the
algorithm correctly returns \textbf{yes} with $P=V(G)\setminus N$.
Hence we assume that there exists an edge $vw\in E(G)$ with $v,w\in N$.

Let
\[
D_1 = G[\{v,\mathrm{wit}_1[v],\mathrm{wit}_2[v],\mathrm{wit}_3[v]\}]
\;\text{and}
\]
\[
D_2 = G[\{w,\mathrm{wit}_1[w],\mathrm{wit}_2[w],\mathrm{wit}_3[w]\}]\quad
\]
be the induced diamonds stored by the witness arrays.
By construction, $v,\mathrm{wit}_1[v]$ are the tips of $D_1$,
$w,\mathrm{wit}_1[w]$ are the tips of $D_2$, and all other vertices of
$D_1\cup D_2$ belong to $P'$.
Since $N\cap P'=\emptyset$ and $vw\in E(G)$, we have
$v\notin V(D_2)$ and $w\notin V(D_1)$.

\smallskip\noindent
\emph{Claim 1:} $D_1$ and $D_2$ are either vertex-disjoint or intersect in exactly
one vertex $x$, and in the latter case
$x=\mathrm{wit}_1[v]=\mathrm{wit}_1[w]$.

\smallskip\noindent
Suppose that $D_1$ and $D_2$ share a vertex $x\in P'$.
Then $(V(D_1)\cup V(D_2))\setminus\{x\}$ lies in a single connected component of
$G[N(x)]$.
Since $v,\mathrm{wit}_1[v],w,\mathrm{wit}_1[w]$ all belong to this component and
$G$ is LUCS, they must lie in the independent side of its unique complete split
partition.
In particular, $v$ and $w$ must be nonadjacent, contradicting $vw\in E(G)$.
Hence $V(D_1)\cap V(D_2)\cap P'=\emptyset$.
If $D_1$ and $D_2$ intersect, they can only share a tip, and therefore
$\mathrm{wit}_1[v]=\mathrm{wit}_1[w]$.

\smallskip\noindent
We distinguish two cases.

\smallskip\noindent
\textbf{Case 1: $\mathrm{wit}_1[v]=\mathrm{wit}_1[w]$.}
Let $x=\mathrm{wit}_1[v]=\mathrm{wit}_1[w]$.
By Claim~1, $V(D_1)\cap V(D_2)=\{x\}$.
We show that the induced subgraph on $V(D_1)\cup V(D_2)$ is exactly $S_2$.

Assume that there exists an additional edge $ab$ besides the edges of the two
diamonds and the edge $vw$.
Then $a$ and $b$ belong to different diamonds.

If one of them is $v$ or $w$ (say $a$) and the other ($b$) is a non-tip of the
opposite diamond, then the vertices of that diamond (excluding $b$) together
with $a$ lie in a single connected component of $G[N(b)]$.
Since $G$ is LUCS, this component is complete split.
As $x$ is a tip in both diamonds, we have $vx\notin E(G)$ and $wx\notin E(G)$.
Hence $v,w,x$ must all belong to the independent side of the complete split
partition of this component.
However, $vw\in E(G)$, which is a contradiction.

If both $a$ and $b$ are non-tips of different diamonds, consider the diamond
$D$ that contains $b$.
Then the vertex $a$, together with the remaining vertices of $D$, lies in a
single connected component of $G[N(b)]$, since the common tip
$x=\mathrm{wit}_1[v]=\mathrm{wit}_1[w]$ is adjacent to all non-tip vertices of
$D$.
As $G$ is LUCS, this component admits a complete split partition.
The tip $x$ belongs to the independent side of this partition, and since $a$ is
adjacent to $x$, it follows that $a$ lies in the clique side.
Consequently, $a$ is adjacent to every vertex of the independent side,
including the other tip of $D$.
In particular, there exists an edge between $a$ and a tip of the opposite
diamond, reducing the situation to the previous case, which is impossible.

Therefore no such edge exists and the induced subgraph is exactly $S_2$.
The algorithm returns $\textsc{Ind}=6$ and a correct degree--lexicographic
certificate.

\smallskip\noindent
\textbf{Case 2: $\mathrm{wit}_1[v]\neq\mathrm{wit}_1[w]$.}
By Claim~1, $D_1$ and $D_2$ are vertex-disjoint.
Consider a vertex $z\in V(D_i)$.
We show that $z$ cannot have two neighbors in $V(D_{3-i})$ of different
diamond-types (one tip and one non-tip), nor three or more neighbors in the
opposite diamond.

Indeed, suppose that some vertex $z\in V(D_i)$ has two distinct neighbors in the
opposite diamond $D_{3-i}$, one of which is a non-tip vertex $y$ and the other
is a tip vertex $u$ of $D_{3-i}$.

Then the set $(V(D_{3-i})\cup\{z\})\setminus\{y\}$ lies in a single connected
component $C$ of $G[N(y)]$.

Since $G$ is LUCS, the component $C$ is complete split.
The two tips of $D_{3-i}$ are nonadjacent and therefore belong to the
independent side of a complete split partition of $C$.
As $u$ (a tip) is adjacent to $z$, it follows that $z$ must lie in the clique
side of $C$, and thus $z$ is adjacent to every vertex of $C$ except itself,
in particular to the other tip of $D_{3-i}$.
Consequently, $z$ is adjacent to both tips of $D_{3-i}$.

Now $z$ is a vertex of $D_i$ and hence is adjacent to both tips of $D_i$ and the another non-tip.
Therefore, the four tips $\{v,\mathrm{wit}_1[v],w,\mathrm{wit}_1[w]\}$ all belong
to $N(z)$.
Moreover, since $vw\in E(G)$, these four vertices lie in the same connected
component $C'$ of $G[N(z)]$.
In $C'$, the nonedges $v\,\mathrm{wit}_1[v]\notin E(G)$ and
$w\,\mathrm{wit}_1[w]\notin E(G)$ force $v$ and $w$ into the independent side of
a complete split partition of $C'$.
But $vw\in E(G)$, contradicting that the independent side is edgeless.

This contradiction shows that $z$ cannot have neighbors of mixed types in the
opposite diamond.

Consequently, suppose that some vertex $z\in V(D_i)$ has exactly two neighbors
$x,y$ in the opposite diamond $D_{3-i}$.
By the previous arguments, $x$ and $y$ must have the same type in $D_{3-i}$:
either both are tips or both are non-tips.
We show that, in fact, the following stronger properties hold.

\smallskip\noindent
\textbf{(I) The vertex $z$ must be a tip of $D_i$.}

Indeed, we distinguish the two possible configurations.

\smallskip\noindent
\emph{(a): $x$ and $y$ are the two tips of $D_{3-i}$.}
Suppose, for contradiction, that $z$ is a non-tip of $D_i$.
Then $z$ is adjacent to the two tips of $D_i$.
By assumption, $z$ is also adjacent to the two tips $x$ and $y$ of $D_{3-i}$.
In particular, since $v$ and $w$ are tips (of different diamonds) and
$vw\in E(G)$, the vertices $\{z,v,w\}$ induce a triangle.

Consequently, at least one of the tips $v$ or $w$ has two neighbors in the
opposite diamond of different types: one is the non-tip $z$, and the other is
its tip-neighbor in the corresponding diamond.
This contradicts the previously established constraint that no vertex can have
neighbors of mixed types in the opposite diamond.
Therefore, $z$ must be a tip of $D_i$.

\smallskip\noindent
\emph{(b) $x$ and $y$ are the two non-tips of $D_{3-i}$.}
Then $(V(D_{3-i})\cup\{z\})\setminus\{x\}$ lies in a single connected component
$C$ of $G[N(x)]$.
Both tips of $D_{3-i}$ belong to $S_C$.
Since $z$ is nonadjacent to both of these tips, it follows that $z\in S_C$ and
hence $z\in N$.
If $z$ were a non-tip of $D_i$, then $z$ would belong to $P'$, contradicting
$P'\cap N=\emptyset$, and this conflict would have been detected earlier by the
algorithm.
Therefore, $z$ must be a tip of $D_i$.

This proves~(I).

\smallskip\noindent
\textbf{(II) Exactly one of the following two situations occurs.}

\smallskip\noindent
\emph{(a) $x$ and $y$ are the two tips of $D_{3-i}$.}
In this case, since $z$ is a tip of $D_i$, it is nonadjacent to the two non-tips
of $D_{3-i}$.
Hence the vertex set $V(D_{3-i})\cup\{z\}$ induces exactly a copy of $S_4$.
Accordingly, the algorithm correctly returns $\textsc{Ind}=5$ together with the
corresponding certificate.

\smallskip\noindent
\emph{(b) $x$ and $y$ are the two non-tips of $D_{3-i}$.}
Then $z\notin\{v,w\}$, since $v$ and $w$ are tips adjacent to each other in the
opposite diamond.
In this configuration, the vertices $z$, $v$, $w$, and the four non-tip vertices
of the two diamonds (seven vertices in total) induce a copy of $S_2$.
Thus the algorithm correctly returns $\textsc{Ind}=6$ with the corresponding
certificate.

\smallskip\noindent
Finally, observe that the two cases above are exactly the situations in which
a vertex has two neighbors in the opposite diamond and yields the largest
$t$-values:
in Case~(II.a) we have $\boldsymbol{t(z)=20}$, and in Case~(II.b) we have
$\boldsymbol{t(z)=14}$.
In every other configuration, each vertex has at most one neighbor in the
opposite diamond, and therefore $\boldsymbol{t(\cdot)\le 10}$.
Hence, the edges between $D_1$ and $D_2$ form a matching that necessarily
contains the edge $vw$.
In this case, the induced subgraph on $V(D_1)\cup V(D_2)$ is isomorphic to
exactly one of the graphs $T_i$ shown in
Figure~\ref{fig: forbidden  subgraphs}.

By construction of the values $t(\cdot)$ and $\rho(\cdot)$, the value
$\rho(D_1)$ uniquely determines which graph $T_i$ occurs, while the auxiliary
$t$-values determine the correspondence between the vertices of the two
diamonds and the vertices of $T_i$.
Consequently, the algorithm correctly returns the value
$\textsc{Ind}=i+6$ together with a sequence $Q$ ordered according to the
degree--lexicographic ordering of $T_i$.

In all cases where $G[N]$ contains an edge, the algorithm outputs a correct
forbidden induced subgraph as a negative certificate.
\end{proof}

\medskip
\noindent\textbf{Conclusion for this step.}
By the previous two proofs, Algorithm~\ref{algo: forbidden-locaclly-split-complete-free}
either
(i) returns \textbf{yes} together with sets $N$ and $P=V(G)\setminus N$ such that
$N$ is independent and all constraints enforced in this step are satisfied, or
(ii) returns \textbf{no} together with a sequence $Q$ inducing a forbidden
induced subgraph (one of the LUCS obstructions, $S_1$, $S_2$, $S_4$, or some
$T_i$), which constitutes a correct negative certificate for this step.

\bigskip
We are ready to present the main result. The next algorithm uses a routine
\(\textsc{finding\_six\_cycles}(G_B)\) that can be implemented in \(O(nm)\) time:
given a bipartite graph \(G_B\) with bipartition \((A_B,N)\) and no \(4\)-cycles,
it decides whether \(G_B\) contains an induced \(C_6\) and, if so, returns a
sequence of vertices of such a cycle starting at a vertex \(s\in N\).
(Observe that in a bipartite \(C_4\)-free graph, every \(6\)-cycle is induced,
since any chord would create a \(C_4\).)

\begin{algorithm}[H]
\caption{\textsc{recognizing\_probe\_diamond\_free}}\label{alg: recognition algo}
\begin{algorithmic}[1]
\Require A graph $G$.
\Ensure \textbf{yes/no}, an indicator $\textsc{Ind}$, a sequence $Q$, and sets
$P$, $N$, and $F$.
If the first return value is \textbf{yes}, then $(P,N,F)$ is a probe/nonprobe
candidate partition with completion set $F$.
Otherwise, $Q$ induces a forbidden subgraph for probe diamond-free graphs (as
specified by $\textsc{Ind}$), ordered according to the degree--lexicographic
convention.

\State $(\textsc{Ans},\textsc{Ind},Q,P,N,F)\gets
\textsc{recognizing\_LUCS\_and\_valid\_partition}(G)$
\State \textbf{if} $\textsc{Ans}=\textbf{yes}$ \textbf{then} \Return $(\textbf{no},\textsc{Ind},Q,\emptyset,\emptyset,\emptyset)$

\State $(G_B,\mathrm{rep})\gets \textsc{auxiliary\_bipartite\_graph}(G)$
\State $(h_4,Q)\gets \textsc{detect\_H4}(G,N,G_B,\mathrm{rep})$
\State \textbf{if} $h_4=\textbf{yes}$ \textbf{then} \Return $(\textbf{no},5,Q,\emptyset,\emptyset,\emptyset)$

\State $(c_6,Q)\gets \textsc{finding\_six\_cycles}(G_B)$
\If{$c_6=\textbf{yes}$}
    \State $s\gets Q[1]$;\ $s'\gets Q[3]$;\ $s''\gets Q[5]$ \Comment{$s,s',s''\in N$}
    \State $(x_1,y_1)\gets \mathrm{rep}(Q[2])$
    \State $(x_2,y_2)\gets \mathrm{rep}(Q[6])$
    \State $(x_3,y_3)\gets \mathrm{rep}(Q[4])$
    \State \Return $(\textbf{no},17,[s,s',s'',x_1,y_1,x_2,y_2,x_3,y_3],\emptyset,\emptyset,\emptyset)$
\EndIf

\State \Return $(\textbf{yes},\bot,[],V(G)\setminus N,N,F)$
\end{algorithmic}
\end{algorithm}

\begin{remark}\label{rem:probe_diamond_free_time}
Algorithm~\ref{alg: recognition algo} runs in $O(nm)$ time.

Indeed, \textsc{recognizing\_LUCS\_and\_valid\_partition} runs in $O(nm)$ time by
Remark~\ref{rem:detect_H4_time} and the corresponding bound proved for
Algorithm~\ref{algo: forbidden-locaclly-split-complete-free} (and its call to
Algorithm~\ref{algo:check_N_independent}).

The auxiliary bipartite graph $G_B$ has $O(n+m)$ vertices and $O(m)$ edges, and
it can be constructed in $O(n+m)$ time.

Algorithm~\ref{algo:detect_H4} runs in $O(nm)$ time.
Finally, \textsc{finding\_six\_cycles}($G_B$) can be implemented by running BFS
from each $s\in N$ to compute the girth of $G_B$ restricted to even cycles; each
BFS takes $O(|E(G_B)|)=O(m)$ time, hence the total time is $O(|N|\cdot m)=O(nm)$.
All remaining operations are linear in the size of the returned certificates.
\end{remark}

\medskip
\noindent\textbf{Correctness of Algorithm~\ref{alg: recognition algo}.}
\begin{proof}
We prove that Algorithm~\ref{alg: recognition algo} is correct by showing that
it returns \textbf{yes} if and only if the input graph $G$ is probe
diamond-free, and otherwise returns a correct forbidden induced subgraph as a
certificate.

\smallskip\noindent
\textbf{Step 1: Verification of the LUCS property and local obstructions.}
The algorithm starts by invoking
Algorithm~\textsc{recognizing\_LUCS}-\textsc{\_and\_valid\_partition}.
If this procedure returns \textbf{no} with
$\textsc{Ind}\in\{1,2,3\}$, then $G$ contains an induced copy of
$\overline{P_4+K_1}$, $\overline{2K_2+K_1}$, or $\overline{P_3+2K_1}$.
Each of these graphs is forbidden for locally union of complete split graphs
and therefore also forbidden for probe diamond-free graphs.
In this case the algorithm correctly outputs a negative answer together with a
valid certificate.

Assume henceforth that the LUCS verification succeeds.
Under this assumption, the algorithm constructs a candidate nonprobe set $N$,
together with auxiliary information describing how vertices participate in
induced diamonds.
If a local role conflict is detected, the algorithm outputs an induced copy of
$S_1$ or $S_4$.
Since both graphs are forbidden for probe diamond-free graphs, this again yields
a correct negative certificate.

\smallskip\noindent
\textbf{Step 2: Validation of the nonprobe set.}
Even if no local conflict arises, the constructed set $N$ is not guaranteed to
be independent.
The call to Algorithm~\textsc{check\_N\_independent} explicitly verifies this
condition.

If $G[N]$ contains an edge, then the algorithm starts from such an edge and
returns a negative certificate.
Depending on the adjacencies between the two diamonds involved, this certificate
may be an induced copy of $S_2$, an induced copy of $S_4$, or one of the graphs
$T_i$ shown in Figure~\ref{fig: forbidden  subgraphs}.
Therefore, the algorithm correctly returns \textbf{no} in this case.

Assume therefore that after this step the algorithm has produced a valid
partition $(P,N)$ with $N$ independent and that no obstruction has been found
so far.

More precisely, the graphs $\overline{P_4+K_1}$, $\overline{2K_2+K_1}$, and
$\overline{P_3+2K_1}$ cannot occur, since they are excluded by the LUCS
verification.
Moreover, the presence of an induced $S_1$ would force a vertex to be assigned
simultaneously to the tentative sets $P'$ and $N$, contradicting the successful
construction with $P'\cap N=\emptyset$.
Finally, the presence of an induced $S_2$ or of one of the graphs $T_i$ would
imply the existence of an edge inside $N$, contradicting the fact that
$N$ is independent.

At this point, we only know that $N$ is an independent set and that these
obstructions have been excluded; an induced
$S_4$ may still exist in $G$ and will be handled in the next
step.

\smallskip\noindent
\textbf{Step 3: Reduction of $S_4$ to $H_4$ and detection of $H_4$.}
We show that any induced $S_4$ in $G$ yields an induced copy of $H_4$.

Consider an induced copy of $S_4$ in $G$.
By definition of $S_4$, it contains an induced diamond $D$ whose tips are the
two nonadjacent vertices, and it has one additional vertex $r$ of degree $2$
adjacent exactly to these two tips (and to no other vertex of $D$).

In our setting, the two tips of $D$ are precisely the nonprobe vertices, hence
they belong to $N$.
Since after Step~2 the set $N$ is an independent set, every other vertex of
this $S_4$---in particular the two non-tips of $D$ and the extra vertex $r$,
which are all adjacent to both tips---must lie in $V(G)\setminus N$.

Therefore, this induced $S_4$ determines an induced copy of $H_4$ in $G$:
a diamond whose tips lie in $N$ together with a vertex $r\in V(G)\setminus N$
adjacent exactly to the two tips.
Accordingly, the algorithm invokes \textsc{detect\_H4}.
If an induced copy of $H_4$ is found, the algorithm correctly outputs a
negative certificate.
If \textsc{detect\_H4} returns \textbf{no}, then $G$ contains no induced copy of
$H_4$, and hence no induced copy of $S_4$.

\smallskip\noindent
\textbf{Step 4: A structural property of representative vertices and absence of $C_4$ in $G_B$.}
Recall that $N$ is an independent set after Step~2, and Step~3 guarantees that
$G$ contains no induced copy of $S_4=\overline{P_3+K_2}$.

\smallskip\noindent
\emph{(a) Representatives lie in $P$.}
Every representative vertex belongs to $V(G)\setminus N$.
Indeed, each representative is adjacent to some vertex $s\in N$ (a tip of an
induced diamond), and since $N$ is independent, no vertex adjacent to $s$ may
belong to $N$.

\smallskip\noindent
\emph{(b) The set $T(s)$ induces a disjoint union of edges.}
Fix $s\in N$ and define
\[
T(s)=\{\, r\in V(G)\setminus N:\ r \text{ is a representative of some }
a\in N_{G_B}(s)\,\}.
\]
By~(a), we have $T(s)\subseteq P$.
Moreover, by Lemma~\ref{lem: neighborhood of vertices in N_G}, the graph
$G[N(s)]$ is a cluster graph, hence each connected component of $G[T(s)]$ is a
clique.
Since vertices in the same connected component of $G[T(s)]$ form a clique, they
must correspond to a single representative pair in the construction of $G_B$.
Therefore, each connected component of $G[T(s)]$ contains exactly two vertices,
and $G[T(s)]$ is a disjoint union of $K_2$'s.

\smallskip\noindent
\emph{Absence of induced $C_4$ in $G_B$.}
Suppose for contradiction that $G_B$ contains an induced $4$-cycle
$(s_1,a_1,s_2,a_2)$ with $s_1,s_2\in N$ and $a_1,a_2\in A_B$.
Let $\mathrm{rep}(a_1)=(v_1,w_1)$ and $\mathrm{rep}(a_2)=(v_2,w_2)$.
Since $a_1,a_2\in N_{G_B}(s_1)$, we have
\[
\{v_1,w_1,v_2,w_2\}\subseteq T(s_1).
\]
By~(b), $G[T(s_1)]$ is a disjoint union of edges, so
$\{v_1,w_1,v_2,w_2\}$ induces exactly $2K_2$ in $G$.

On the other hand, by definition of the edges of $G_B$, each of
$v_1,w_1,v_2$ is adjacent to both $s_1$ and $s_2$, while $s_1$ and $s_2$ are
nonadjacent because they belong to $N$.
Hence the vertex set $\{s_1,s_2,v_1,w_1,v_2\}$ induces exactly a copy of
$S_4=\overline{P_3+K_2}$ in $G$, contradicting the conclusion of Step~3.
Therefore, $G_B$ is $C_4$-free.

\smallskip\noindent
\textbf{Step 5: Detection of $C_6$ and correctness of the $S_3$ certificate.}
Assume that Algorithm~\textsc{finding\_six\_cycles} returns an induced cycle
\[
(s,a_1,s',a_2,s'',a_3)
\]
of length~$6$ in $G_B$, where $s,s',s''\in N$ and each $a_i\in A_B$ has
representative $\mathrm{rep}(a_i)=(x_i,y_i)$.

By construction of the auxiliary graph, each pair $x_i,y_i$ is adjacent in $G$.
Moreover, by Step~4(b), for any fixed $s\in N$ the set $T(s)$ induces a disjoint
union of edges.

Since $(s,a_1,s',a_2,s'',a_3)$ is an induced $C_6$ in $G_B$, each vertex among
$s,s',s''$ is adjacent in $G_B$ to exactly two of $a_1,a_2,a_3$, and hence any
two of the three vertices $a_1,a_2,a_3$ share a common neighbor in
$\{s,s',s''\}$.
Fix $i\neq j$ and let $s^\circ\in\{s,s',s''\}$ be a common neighbor of $a_i$ and
$a_j$ in $G_B$.
Then $\{x_i,y_i,x_j,y_j\}\subseteq T(s^\circ)$, and Step~4(b) implies that
$G[T(s^\circ)]$ is a disjoint union of edges.
Therefore, $\{x_i,y_i\}$ and $\{x_j,y_j\}$ must be vertex-disjoint and there is
no edge between them, otherwise the four vertices would lie in a single clique
component of $G[T(s^\circ)]$ and would correspond to a single representative
pair in $G_B$, contradicting $a_i\neq a_j$.

Applying this to the three pairs, we conclude that
\[
\{x_1,y_1,x_2,y_2,x_3,y_3\}
\]
induces exactly a copy of $3K_2$ in $G$.

Consequently, the vertex set
\[
\{s,s',s'',x_1,y_1,x_2,y_2,x_3,y_3\}
\]
induces a graph consisting of three disjoint edges on the representatives,
together with adjacencies prescribed by the cycle in $G_B$.

It remains to exclude adjacencies between vertices of $N$ and representatives
that are not prescribed by the cycle.
Suppose that there exists an edge $s^*u_i\in E(G)$, where
$s^*\in\{s,s',s''\}$ and $u_i\in\{x_i,y_i\}$ is a representative of
$a_i\notin N_{G_B}(s^*)$.
Choose $s^+\in\{s,s',s''\}\setminus\{s^*\}$ and let
$a_j\in N_{G_B}(s^*)\cap N_{G_B}(s^+)\cap\{a_1,a_2,a_3\}$.
Then the subgraph induced by
\[
\{s^*,s^+,x_j,y_j,u_i\}
\]
is isomorphic to $S_4=\overline{P_3+K_2}$, contradicting the conclusion of
Step~3.

Therefore, no such additional adjacencies exist, and the vertices
\[
\{s,s',s'',x_1,y_1,x_2,y_2,x_3,y_3\}
\]
induce exactly a copy of $S_3$.
Hence, the algorithm correctly returns this set as a forbidden induced subgraph.

\smallskip\noindent
\textbf{Conclusion.}
If the algorithm returns \textbf{yes}, then all forbidden induced subgraphs for
probe diamond-free graphs have been excluded, and the returned partition
$(P,N)$ together with $F$ yields a valid probe diamond-free representation of
$G$.
Conversely, if $G$ is not probe diamond-free, the algorithm returns
\textbf{no} together with a correct forbidden induced subgraph.
This proves the correctness of Algorithm~\ref{alg: recognition algo}.
\end{proof}

\section{Conclusions}\label{sec: conclusion} 
We studied the recognition problem for probe diamond-free graphs.
Although this class admits a characterization by forbidden induced subgraphs,
such characterizations do not, by themselves, yield efficient recognition
algorithms.

In contrast, we introduced a new local structural characterization based on the
locally union of complete split property and an auxiliary bipartite graph,
and leveraged it to design an $O(nm)$-time recognition algorithm.

A key aspect of our algorithm is that it is \emph{certificate-producing}.
For non-members, it outputs a negative certificate in the form of a
degree--lexicographically ordered sequence of vertices inducing a minimal
forbidden subgraph.
This ordered representation enables particularly simple and efficient
certificate verification, in contrast to unordered certificates, whose
verification may require checking exponentially many vertex permutations in
the maximum size of a minimal forbidden induced subgraph for the class.

For members of the class, the algorithm outputs a positive certificate given by
a probe partition $(P,N)$ together with a completion set $F$.
Although verifying such certificates can be done via diamond-free recognition,
the current best algorithms for that task~\cite{LinSS2012} may lead to higher
worst-case complexity due to the size of $F$.
This highlights an uncommon situation in which producing certificates is
asymptotically easier than verifying them.

Our algorithm computes a set $N$ and a completion set $F$ with the following
canonical properties. The set $N$ is contained in the set of nonprobes of
every admissible partition of a probe diamond-free graph, and $F$ is contained
in every completion set whose addition yields a diamond-free completion.
Consequently, $F$ has minimum cardinality among all such completion sets. In
particular, our results provide an alternative to both the graph sandwich
approach of Dantas et al.~\cite{DantasFdaST2011} and the purely
forbidden-subgraph-based recognition derived from~\cite{BonomoFDDGSS}.


\begin{thebibliography}{99}
\bibitem{BAyerLR2009}
Daniel Bayer, Van~Bang Le, and H.~N. de~Ridder.
\newblock Probe threshold and probe trivially perfect graphs.
\newblock {\em Theoret. Comput. Sci.}, 410(47-49):4812--4822, 2009.

\bibitem{BerryGL2007}
Anne Berry, Martin~Charles Golumbic, and Marina Lipshteyn.
\newblock Recognizing chordal probe graphs and cycle-bicolorable graphs.
\newblock {\em SIAM J. Discrete Math.}, 21(3):573--591, 2007.

\bibitem{BonomoFDDGSS}
Flavia Bonomo, Celina M.~H. de~Figueiredo, Guillermo Dur\'{a}n, Luciano~N.
Grippo, Mart\'{\i}n~D. Safe, and Jayme~L. Szwarcfiter.
\newblock On probe 2-clique graphs and probe diamond-free graphs.
\newblock {\em Discrete Math. Theor. Comput. Sci.}, 17(1):187--199, 2015.

\bibitem{ChandlerCKLP2008}
David~B. Chandler, Maw-Shang Chang, Ton Kloks, Jiping Liu, and Sheng-Lung Peng.
\newblock Partitioned probe comparability graphs.
\newblock {\em Theoret. Comput. Sci.}, 396(1-3):212--222, 2008.

\bibitem{ChandlerCMK2009}
David~B. Chandler, Maw-Shang Chang, Ton Kloks, Jiping Liu, and Sheng-Lung Peng.
\newblock On probe permutation graphs.
\newblock {\em Discrete Appl. Math.}, 157(12):2611--2619, 2009.

\bibitem{ChangKloksLiuPeng2005}
G.~J. Chang, T.~Kloks, J.~Liu, and S.-L. Peng.
\newblock The {PIGs} full monty -- a floor show of minimal separators.
\newblock In V.~Diekert and B.~Durand, editors, {\em Proceedings of the 22nd
	International Symposium on Theoretical Aspects of Computer Science (STACS
	2005)}, volume 3403 of {\em Lecture Notes in Computer Science}, pages
521--532, Heidelberg, 2005. Springer.

\bibitem{Chang2013}
Maw-Shang Chang, Ling-Ju Hung, and Peter Rossmanith.
\newblock Recognition of probe distance-hereditary graphs.
\newblock {\em Discrete Appl. Math.}, 161(3):336--348, 2013.

\bibitem{DabrowskiEtAl2026}
Konrad~K. Dabrowski, Tala Eagling{-}Vose, Matthew Johnson, Giacomo Paesani, and
Dani{\"{e}}l Paulusma.
\newblock Finding d-cuts in probe h-free graphs.
\newblock In Artur Jez and Jan Otop, editors, {\em Fundamentals of Computation
	Theory - 25th International Symposium, {FCT} 2025, Wroc{\l}aw, Poland,
	September 15-17, 2025, Proceedings}, volume 16106 of {\em Lecture Notes in
	Computer Science}, pages 109--121. Springer, 2025.

\bibitem{DantasFdaST2011}
Simone Dantas, Celina M.~H. de~Figueiredo, Murilo V.~G. da~Silva, and Rafael~B.
Teixeira.
\newblock On the forbidden induced subgraph sandwich problem.
\newblock {\em Discrete Appl. Math.}, 159(16):1717--1725, 2011.

\bibitem{LeR2007bis}
Van~Bang Le and H.~N. de~Ridder.
\newblock Characterisations and linear-time recognition of probe cographs.
\newblock In Andreas Brandst{\"{a}}dt, Dieter Kratsch, and Haiko M{\"{u}}ller,
editors, {\em Graph-Theoretic Concepts in Computer Science, 33rd
	International Workshop, {WG} 2007, Dornburg, Germany, June 21-23, 2007.
	Revised Papers}, volume 4769 of {\em Lecture Notes in Computer Science},
pages 226--237. Springer, 2007.

\bibitem{LeR2007}
Van~Bang Le and H.~N. de~Ridder.
\newblock Probe split graphs.
\newblock {\em Discrete Math. Theor. Comput. Sci.}, 9(1):207--238, 2007.

\bibitem{LeP2015}
Van~Bang Le and Sheng-Lung Peng.
\newblock Characterizing and recognizing probe block graphs.
\newblock {\em Theoret. Comput. Sci.}, 568:97--102, 2015.

\bibitem{LinSS2012}
Min~Chih Lin, Francisco~J. Soulignac, and Jayme~L. Szwarcfiter.
\newblock Arboricity, {$h$}-index, and dynamic algorithms.
\newblock {\em Theoret. Comput. Sci.}, 426/427:75--90, 2012.

\bibitem{McConnellN2009}
Ross~M. McConnell and Yahav Nussbaum.
\newblock Linear-time recognition of probe interval graphs.
\newblock In {\em Algorithms---{ESA} 2009}, volume 5757 of {\em Lecture Notes
	in Comput. Sci.}, pages 349--360. Springer, Berlin, 2009.

\bibitem{Nussbaum2014}
Yahav Nussbaum.
\newblock Recognition of probe proper interval graphs.
\newblock {\em Discrete Appl. Math.}, 167:228--238, 2014.

\bibitem{ZhangSchonFischerEtAl1994}
P.~Zhang, E.~Sch{\"o}n, S.~Fischer, E.~C., J.~Weiss, S.~Kistler, and P.~Bourne.
\newblock An algorithm based on graph theory for the assembly of contigs in
physical mapping of {DNA}.
\newblock {\em Computer Applications in the Biosciences}, 10(3):309--317, 1994.

\end{thebibliography}

\end{document}